\newtheorem{assumption}{Assumption}[section]
\newtheorem{lemma}{Lemma}[section]
\newtheorem{definition}{Definition}[section]
\newtheorem{remark}{Remark}[section]
\newtheorem{theorem}{Theorem}[section]
\newcommand\numberthis{\addtocounter{equation}{1}\tag{\theequation}}
\newcommand{\hmin}{h_{\min}}
\newcommand{\hmax}{h_{\max}}
\newcommand{\ito}{It\^{o}\xspace}
\newcommand{\CxJ}{C_{\texttt{X}}\xspace}
\newcommand{\Czeta}[1]{\bar L}
\newcommand{\order}{\delta}
\newcommand{\GamRX}{\Gamma_{2,n}}
\newcommand{\GamR}{\Gamma_2}
\newcommand{\Yjump}[1]{Y^{\texttt{\normalfont J}}(#1)}
\newcommand{\YjumpBefore}[1]{\widehat{Y}(#1)}
\newcommand{\YjumpStart}{\widetilde Y^{\texttt{J}}_0}
\newcommand{\Xjump}[1]{X^{\texttt{\normalfont J}}(#1)}
\newcommand{\XjumpBefore}[1]{\widehat{X}(#1)}
\newcommand{\XjumpStart}{X^{\texttt{J}}_0}
\newcommand{\hjump}[1]{h_{#1}}
\newcommand{\NumJumps}[1]{\bar N_{#1}}
\newcommand{\Ejump}[1]{E^{\texttt{\normalfont J}}(#1)}
\newcommand{\EjumpBefore}[1]{\widehat{E}(#1)}
\newcommand{\EThetaJump}[1]{E_{\mainMap}^{x,y}(#1)}
\newcommand{\EVarphiJump}[1]{E_{\bsMap}^{x,y}(#1)}
\newcommand{\Epsi}[3]{E_{\psi}^{#2,#3}(#1)}
\newcommand{\JAAM}{JA-AMs\xspace}
\newcommand{\JAAMM}{JA-AMM\xspace}
\newcommand{\mainMap}{\mathcal{M}}
\newcommand{\bsMap}{\mathcal{B}}
\newcommand{\finalC}[1]{C(\lambda,\rho,#1)}
\newcommand{\nth}[1]{${#1}^{\text{th}}$}
\title[Adaptive time-stepping for SDEs with jumps]{Strong convergence of a class of adaptive numerical methods for SDEs with jumps}
\author{C\'onall Kelly$^{\,\star}$}
\address{$\star\,$ School of Mathematical Sciences, University College Cork, Ireland. \texttt{conall.kelly@ucc.ie}}
\author{Gabriel Lord$^{\,\dagger}$}
\address{$\dagger\,$ Mathematics, IMAPP, Radboud University, Nijmegen, The Netherlands. \texttt{gabriel.lord@ru.nl}} 
\author{Fandi Sun$^{\,\ddagger}$}
\address{$\ddagger\,$ Department of Mathematics, Heriot-Watt University, Edinburgh, UK. \texttt{fandi.sun@outlook.com}}
\date{\today}
\keywords{Stochastic Jump Differential Equations \and Adaptive Timestepping \and Jump-adapted Mesh \and Non-globally Lipschitz Coefficients \and Strong Convergence}
\begin{document}

\begin{abstract} 
We develop adaptive time-stepping strategies for It\^o-type stochastic differential equations (SDEs) with jump perturbations. Our approach builds on adaptive strategies for SDEs. Adaptive methods can ensure strong convergence of nonlinear SDEs with drift and diffusion coefficients that violate global Lipschitz bounds by adjusting the stepsize dynamically on each trajectory to prevent spurious growth that can lead to loss of convergence if it occurs with sufficiently high probability. In this article we demonstrate the use of a jump-adapted mesh that incorporates jump times into the adaptive time-stepping strategy. We prove that any adaptive scheme satisfying a particular mean-square consistency bound for a nonlinear SDE in the non-jump case may be extended to a strongly convergent scheme in the Poisson jump case where jump and diffusion perturbations are mutually independent and the jump coefficient satisfies a global Lipschitz condition. 
\end{abstract} 

\maketitle


\section{Introduction}
We investigate the use of adaptive time-stepping strategies in the construction of a class of strongly convergent numerical schemes for a $d$-dimensional stochastic jump differential equation (SJDE) shown as
\begin{multline}
X^J(t)= X^J_0+\int_{0}^{t}f\big(X^J(r-)\big)dr +\sum_{i=1}^{m}\int_{0}^{t}g_i\big(X^J(r-)\big)dW_i(r)\\
+\int_{0}^{t}\int_Z\gamma\big(z,X^J(r^-)\big) J_{\nu}(dz\times dr),\label{eq:true_jump}
\end{multline}
where $t\in[0,T]$, $T\geq 0$ and $W=[W_1,\cdots, W_m]^T$ is an $m$-dimensional Wiener process. In \eqref{eq:true_jump}, the integral with respect to $W$ is of \ito type, and the jump contribution is expressed as an integral with respect to a Poisson random measure $J_\nu$ with finite intensity measure $\nu$.

Let solutions of the SJDEs \eqref{eq:true_jump} and its jump-free version \eqref{eq:true} be defined on a filtered probability space $(\Omega , \mathcal{F} , (\mathcal{F}_t )_{t \geq 0} , \mathbb{P})$. The initial vector $X^J(0)=X^J_0\in L_2(\mathbb{R}^d)$ is $\mathcal{F}_0$-measurable. We consider the case where the filtration $(\mathcal{F}_t)_{t\geq 0}$ admits the decomposition $\mathcal{F}_t=\sigma\big(\mathcal{G}_t\cup \mathcal{H}_T\big)$ for all $t\geq 0$, where $W$ is adapted to the filtration $(\mathcal{G}_t)_{t\geq 0}$, and $J_\nu$ is adapted to the filtration $(\mathcal{H}_t)_{t\geq 0}$, and the $\sigma$-algebra $\mathcal{G}_t$ is independent of $\mathcal{H}_T$ for all $t\in[0,T]$. This will be the case if $W$ and $J_\nu$ are independent, and allows for the jump times to be pre-computed at time $t=0$.

The drift coefficient $f: \mathbb{R}^d \rightarrow \mathbb{R}^d$ and the diffusion coefficient $g: \mathbb{R}^d\rightarrow \mathbb{R}^{d\times m}$ are both twice continuously differentiable and each satisfy a local Lipschitz condition along with a polynomial growth condition and, together, a monotone condition. The jump coefficient $\gamma:Z\times\mathbb{R}^d\rightarrow\mathbb{R}^d$ satisfies a global Lipschitz condition in the second argument. When $\gamma\equiv 0$, \eqref{eq:true_jump} becomes the $d$-dimensional SDE of \ito type 
\begin{align}
X(t)= X(0)+\int_{0}^{t}f(X(r))dr +\sum_{i=1}^{m}\int_{0}^{t}g_i(X(r))dW_i(r)\label{eq:true}.
\end{align}

SJDEs are used to model processes subject to perturbation both by continuous diffusion and event-driven random effects. For example asset price models in finance are subject to the continuous effects of market volatility as well as discrete shocks such as market crashes. Similarly, in ecology the dynamics of an ecosystem will be influenced by ongoing random effects due to variations in (for example) food supply and fecundity, but also by sudden environmental inputs such as fertiliser or toxin spills.

Strategies for the numerical approximation of SDEs with jumps such as \eqref{eq:true_jump} can be divided broadly into two categories: jump adapted and non-jump adapted. Jump adapted approaches discretise the interval $[0,T]$ so that all jump times are included as meshpoints. Non-jump adapted approaches do not do this, and allow jump times to fall between meshpoints. Jump adapted discretization for jump SDEs with globally Lipschitz $f$, $g_i$ and $\gamma$ was introduced in \cite{bruti2007strong}. The approach allows any strongly convergent numerical method for the non-jump SDE \eqref{eq:true} to be extended to the SJDE \eqref{eq:true_jump} via a mesh that is a superposition of jump times and equidistant time steps. We are motivated by \cite{bruti2007strong} to introduce Jump Adapted-Adaptive Methods (\JAAM) in this article. 

Our intention is to handle nonlinear functional response from the drift and diffusion coefficients $f$ and $g_i$ by the kind of procedural adaptive time-stepping used in \cite{KLSBit}, but to also ensure that jump times are included as meshpoints. At each step, either the next adaptive landing point or the next jump time is taken, whichever is sooner. Our use of adaptive time-stepping rather than an equidistant mesh to handle the nonlinearity of the coefficients allows us to weaken the assumptions on $f$ and $g_i$ so that they do not need to be globally Lipschitz continuous. This approach allows us to build upon any adaptive numerical method that satisfies certain local error conditions for the non-jump SDE \eqref{eq:true} to achieve strong $L_2$ convergence for the SJDE \eqref{eq:true_jump}. In particular we can construct a Jump Adapted-Adaptive Milstein Method (\JAAMM) that builds upon analysis of the adaptive Milstein method for \eqref{eq:true} in \cite{KLSBit} to achieve order-one strong $L_2$ convergence for \eqref{eq:true_jump} even in the case where the diffusion coefficient $g$ is not required to satisfy a commutativity condition.

Since the jump adapted approach can be computationally expensive when used to simulate processes with high jump intensity, there has also been recent interest in extending the applicability of numerical schemes developed for nonlinear SDEs to SJDEs using the non-jump adapted approach. We summarise some of the most important works here and indicate to what class of coefficient each method applies. For SDEs with globally Lipschitz $f$, $g_i$ and $\gamma$, Higham \& Kloeden~\cite{higham2006convergence} treated an implicit Euler-Maruyama method and Ren \& Tian~\cite{ren2022mean} introduced the two-step Milstein method. For the case where $\gamma$ remains globally Lipschitz continuous, but $f$ and each $g_i$ together satisfy a monotone condition, Deng et al~\cite{deng2019truncated} introduced the truncated Euler-Maruyama method. For SDEs where all of $f$, $g_i$ and $\gamma$ together satisfy a monotone condition, Dareiotis et al~\cite{dareiotis2016tamed} introduced the tamed Euler-Maruyama method, Li et al~\cite{li2021compensated} introduced the compensated projected Euler-Maruyama method, and Kumar \& Sabanis~\cite{kumar2014tamed} introduced the tamed Milstein method.

Adapative time-stepping to control numerical growth has been shown to be effective for SDEs as an alternative to taming (e.g. \cite{hutzenthaler2011strong,kumar2014tamed}) and projection-type methods (e.g. \cite{beyn2017stochastic,li2021compensated}), which are all (in general) more computationally efficient than implicit methods. See for example \cite{kelly2018adaptive,fang2016adaptive,reisinger2020adaptive}. The adaptive approach provides automatic stepsize reduction in domains where the SDE is locally stiff. This article extends work contained in the PhD thesis of the third author~\cite{Fandithesis}, which contains a detailed  analysis of an adaptive Milstein scheme for SJDEs.

The structure of the article is as follows. Mathematical preliminaries are given in Section \ref{sec:prelim}, including precise specifications of the conditions imposed on each $f$, $g_i$ and $\gamma$. The construction of \JAAM is shown in Section \ref{sec:adpt_strategy_SDE_jump}. The main result, on strong $L_2$ convergence of \JAAM of order 1, is in Section \ref{sec:result_jump}. Finally, numerical examples are in Section \ref{sec:numerics}.

\section{Mathematical preliminaries}\label{sec:math_intro_jump}\label{sec:prelim}
For all $x \in \mathbb{R}^d$ and for all $\phi(x)\in \mathrm{C}^2(\mathbb{R}^d,\mathbb{R}^d)$, the Jacobian matrix of $\phi(x)$ is denoted $\mathbf{D}\phi(x)\in\mathcal{L}(\mathbb{R}^d,\mathbb{R}^d)$; the second derivative of $\phi(x)$ with respect to a vector $x$ forms a 3-tensor and is denoted $\mathbf{D}^2\phi(x)\in\mathcal{L}(\mathbb{R}^{d\times d},\mathbb{R}^d)$; and $[x]^2:=x\otimes x$ stands for the outer product of $x$ and itself.
Furthermore, let $\|\cdot\|$ denote the standard Euclidean  norm in $\mathbb{R}^d$, $\|\cdot\|_{\mathbf{F}(a\times b)}$ the Frobenious norm in $\mathbb{R}^{a\times b}$; for simplicity we write $\|\cdot\|_{\mathbf{F}}$ as the Frobenious norm in $\mathbb{R}^{d\times d}$. $\|\cdot\|_{\mathbf{T}_3}$ denotes the induced tensor norm (spectral norm) in $\mathbb{R}^{d\times d\times d}$ and it is defined as $\big\|\cdot \big\|_{\mathbf{T}_3}:=\sup_{h_1,h_2\in\mathbb{R}^d, \|h_1\|,\|h_2\|\leq 1}\big\|\cdot(h_1\otimes h_2)\big\|$. We denote the left-hand limit as $\phi(t^-):=\lim_{s\rightarrow t,s<t}\phi(s)$.
Finally, for $a,b\in\mathbb{R}$, $a\vee b$ denotes max$\{a,b\}$, and $a\wedge b$ denotes min$\{a,b\}$. 

\subsection{Finite-activity Poisson jump processes}
Let $(\pi_i)_{i\geq 1}$ be a sequence of independent and identically distributed (i.i.d.) exponential random variables with  parameter $\lambda\in\mathbb{R}$ and define 
\begin{equation}\label{eq:taui}
\tau_i:=\sum_{j=1}^{i}\pi_j.
\end{equation}
We will interpret the sequence of increasing non-negative random variables $(\tau_i)_{i\geq 1}$ as the jump times of a Poisson process where $\lambda$ is the jump intensity, so that each $\pi_i \sim \exp(1/\lambda)$ represents the waiting time between two jumps.

\begin{definition}\label{def:N_bar}
The homogeneous Poisson point process $(\NumJumps{t})_{t\geq 0}$ counts the number of jumps that take place over the interval $[0,t]$ for all $t\geq 0$, written
$$
\NumJumps{t}:= \# \big\{i\geq 1,\,\, \tau_i\in [0,t]\big\}.
$$ 
Now define the Poisson random measure with finite intensity measure $\nu$ as $J_\nu:Z\times [0,T] \rightarrow \mathbb{N}$ where with $Z \subseteq \mathbb{R}^d\backslash\{0\}$ represents the domain of possible jump sizes
\begin{equation}\label{eq:Jnu}
    J_\nu(dz \times dt) := \# \Big\{i\geq 1,\,\, (\zeta_i, \tau_i )\in dz \times  dt \Big\},
\end{equation}
and $dz$, $dt$ are Lebesgue-measurable subsets of $Z$, $[0,T]$ respectively. 
\end{definition}
\begin{remark}
\label{rem:J(s)_define}
In \eqref{eq:Jnu}, $J_\nu(dz\times dt)$ counts the number of jumps in $dt\subseteq (0,T]$ with jump size contained in the set $dz\subseteq Z$. For $i\geq 1$, each $\zeta_i$ represents the size of the jumps observed at the corresponding jump time $\tau_i$. 
\end{remark}
The intensity measure $\nu$ is characterised by 
$$
\nu(dz)m(dt):=\mathbb{E}[J_{\nu}(dz\times dt)],
$$ 
where $m(dt)$ is the Lebesgue measure of $dt$. By setting $dz=Z$ and $dt=[0,1]$ we can see that $\nu(Z)=\lambda<\infty$.

\begin{definition}
Given a jump coefficient $\gamma: Z \times \mathbb{R}^d \rightarrow \mathbb{R}^d$  we can construct a jump process $\chi$ with respect to the Poisson random measure $J_\nu$ as 
\begin{equation}
    \chi(t)=\int_0^t\int_Z\gamma\big(z,u(s)\big)J_\nu(dz\times ds)=\sum_{\{i,\,\tau_i\in[0,t]\}}\gamma\big(\zeta_i, u(\tau_i)\big). \label{eq:jump_sum=int}
\end{equation}
for any c\`adl\`ag process $u:[0,T]\to\mathbb{R}^d$ adapted to the natural filtration of $J_\nu$.
\end{definition}
Since we will be working on a jump adapted mesh which requires analysis at exact jump times,  the following convention is useful:
\begin{multline} \label{eq:def_measureAtPoint}
    \int_Z\gamma(z, u^-)J_{\nu}(dz\times\{t\}):=\int_0^t\int_Z\gamma\big(z,u(s)\big)J_{\nu}(dz\times ds)-\int_0^{t^-}\int_Z\gamma\big(z,u(s)\big)J_{\nu}(dz\times ds). 
\end{multline}
 
\subsection{Existence and uniqueness of solutions}

Our analysis extends the strong convergence of a numerical method for \eqref{eq:true} to the jump case \eqref{eq:true_jump}, so we present assumptions for each in turn.

\begin{assumption} \label{ass:f+g}
Let $f\in \mathrm{C}^2(\mathbb{R}^d,\mathbb{R}^d)$ and $g\in\mathrm{C}^2(\mathbb{R}^d,\mathbb{R}^{d\times m})$ with each column of $g$ that $g_i(x)=[g_{1,i}(x),\dots,g_{d,i}(x)]^T\in\mathrm{C}^2(\mathbb{R}^d,\mathbb{R}^{d})$. For each $\varkappa \geq 1$  there exist $L^{\varkappa}>0$ such that
\begin{align}
    \big\|f(x)-f(y)\big\|^2+\big\|g(x)-g(y)\big\|^2_{\mathbf{F}(d\times m)}\leq L^{\varkappa}\big\|x-y\big\|^2,  \label{eq:local_lipschitz}
\end{align}
for $x,y\in\mathbb{R}^d$ with $\|x\| \vee \|y\|\leq \varkappa$, and there exists $L\geq 0$ such that for some $\eta \geq 2$
\begin{align}
    \big\langle x-y, f(x)-f(y)\big\rangle+\frac{\eta-1}{2} \big\|g(x)-g(y)\big\|^2_{\mathbf{F}(d\times m)} \leq L\big\|x-y\big\|^2.  \label{eq:monotone}
\end{align}
\end{assumption}
Under \eqref{eq:local_lipschitz} and \eqref{eq:monotone}, the SDE \eqref{eq:true} has a unique strong solution on any interval $[0, T ]$, where $T < \infty$ on the filtered probability space $(\Omega,\mathcal{F},(\mathcal{F}_t)_{t\geq 0},\mathbb{P})$, see~\cite{Hasminskii}, \cite{mao2007SDEapp} and \cite{tretyakov2013fundamental}. 

Now additionally assume for jump coefficient $\gamma$ the following.
\begin{assumption}\label{ass:jump_gamma}
Suppose that $f\in \mathrm{C}^2(\mathbb{R}^d,\mathbb{R}^d)$ and  $g\in\mathrm{C}^2(\mathbb{R}^d,\mathbb{R}^{d\times m})$ with each $g_i(x)=[g_{1,i}(x),\dots,g_{d,i}(x)]^T\in\mathrm{C}^2(\mathbb{R}^d,\mathbb{R}^{d})$ for $i=1,\dots,d$. Let $\gamma\in \mathrm{C}^2\big((\mathbb{R}^d \backslash \{0\})\times\mathbb{R}^d,\mathbb{R}^d\big)$. Let $x,y\in\mathbb{R}^d$ with $\|x\|\vee\|y\|\leq\varkappa$, and for $\varkappa>1$, there exists $L^{\varkappa} \leq \infty$ such that 
\begin{multline}
    \quad 2\big\langle x-y, f(x)-f(y)\big\rangle+ \big\|g(x)-g(y)\big\|^2_{\mathbf{F}(d\times m)}\\
    +\int_Z\big\|\gamma(z,x)-\gamma(z,y)\big\|^2\nu(dz) \,\,\leq\,\, L^{\varkappa}\big\|x-y\big\|^2.  \label{eq:local_lipschitz_jump}
\end{multline}
Furthermore, we assume that there exist $\Czeta{1}$ such that
\begin{align}
    \int_Z\big\|\gamma(z,x)-\gamma(z,y)\big\|^2\nu(dz) \leq&\,\, \Czeta{1}\big\|x-y\big\|^2.  \label{eq:monotone_gamma}
\end{align}
\end{assumption}

Under Assumptions \ref{ass:f+g} and \ref{ass:jump_gamma}, the SJDE \eqref{eq:true_jump} has a unique strong solution on any interval $[0, T ]$, where $T < \infty$, see~\cite[Thm. 2]{gyongy1980stochastic}.

\begin{definition}\label{def:stoppingTime}
A random variable $\mu:\Omega\rightarrow [0,\infty]$ is called an $\mathcal{F}_t$-stopping time (or simply, stopping time) if $\{\omega:\mu(\omega)\leq t\}\in \mathcal{F}_t$ for any $t\geq 0$. If $\mu$ is a stopping time, define 
\begin{align}
   \mathcal{F}_{\mu}:=\{A\in\mathcal{F}\,:\,A\cap\{\omega:\mu(\omega)\leq t\}\in\mathcal{F}_t,\,\text{ for all }\,t\geq 0\},  \label{def:filtration_stopping}
\end{align}
which is a sub-$\sigma$-algebra of $\mathcal{F}$.
\end{definition}

\section{Jump Adapted-Adaptive Methods (\JAAM)}\label{sec:adpt_strategy_SDE_jump}

\subsection{Jump adapted meshes with procedurally generated nodes}
Let $\{h_{n+1}\}_{n\in\mathbb{N}}$ be a sequence of strictly positive random timesteps with corresponding random times $\{t_n:=\sum_{i=1}^{n}h_i\}_{n\in\mathbb{N}\backslash \{0\}}$, where $t_0=0$.

\begin{lemma}\label{lem:stoppingTimes}
For each $k\in\mathbb{N}$, if $t_k$ is an $\mathcal{F}_t$-stopping time and  $h_{k+1}$ is $\mathcal{F}_{t_k}$-measurable, then $t_{k+1}$ is an $\mathcal{F}_t$-stopping time.
\end{lemma}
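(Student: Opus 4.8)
The plan is to recognise that $t_{k+1}=t_k+h_{k+1}$ is the sum of the stopping time $t_k$ and the nonnegative, $\mathcal{F}_{t_k}$-measurable random variable $h_{k+1}$, and to prove the general fact that such a sum is again a stopping time. Writing $S:=t_k$ and $Y:=h_{k+1}\ge 0$, the goal is to show $\{S+Y\le t\}\in\mathcal{F}_t$ for every $t\ge 0$, working directly from Definition~\ref{def:stoppingTime} so that no right-continuity of the filtration is needed.

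First I would treat the case where $Y$ is simple, say $Y=\sum_{j} c_j\mathbf{1}_{A_j}$ with $c_j\ge 0$ and each $A_j\in\mathcal{F}_S$. Then
\[
\{S+Y\le t\}=\bigcup_j\big(A_j\cap\{S\le t-c_j\}\big),
\]
and for each $j$ the defining property of $\mathcal{F}_S$ in \eqref{def:filtration_stopping} gives $A_j\cap\{S\le t-c_j\}\in\mathcal{F}_{(t-c_j)\vee 0}\subseteq\mathcal{F}_t$ (the set being empty, hence trivially in $\mathcal{F}_t$, when $t-c_j<0$ since $Y\ge 0$). A finite union of such sets lies in $\mathcal{F}_t$, so $S+Y$ is a stopping time in the simple case.

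Next I would remove the simplicity assumption by monotone approximation. Because $Y$ is $\mathcal{F}_S$-measurable and nonnegative, the standard dyadic truncations $Y_n:=\sum_{k=0}^{n2^n-1}k2^{-n}\mathbf{1}_{\{k2^{-n}\le Y<(k+1)2^{-n}\}}+n\mathbf{1}_{\{Y\ge n\}}$ are simple, $\mathcal{F}_S$-measurable (each level set lies in $\mathcal{F}_S$), and increase pointwise to $Y$. By the previous step each $S+Y_n$ is a stopping time, and $S+Y_n\uparrow S+Y$. Finally, the increasing limit of stopping times is a stopping time, since
\[
\{S+Y\le t\}=\Big\{\sup_n (S+Y_n)\le t\Big\}=\bigcap_n\{S+Y_n\le t\}\in\mathcal{F}_t,
\]
which completes the argument with $S=t_k$, $Y=h_{k+1}$ and $S+Y=t_{k+1}$.

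The only real subtlety, and the step I would be most careful with, is the correct invocation of the definition of $\mathcal{F}_S$ in the simple case: it is precisely the property $A_j\cap\{S\le s\}\in\mathcal{F}_s$ that allows the constant shift $c_j$ to be absorbed into the time index, while everything else is routine measure-theoretic bookkeeping. A tempting shortcut via a rational decomposition $\{S+Y<t\}=\bigcup_{q\in\mathbb{Q}}\big(\{S\le q\}\cap\{Y<t-q\}\big)$ only yields membership in $\mathcal{F}_{t+}$ after intersecting over $t+1/n$, so I would avoid it here, since the filtration $(\mathcal{F}_t)_{t\ge 0}$ is not assumed right-continuous.
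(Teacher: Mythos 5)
Your proof is correct, and it takes a genuinely different route from the paper's. The paper proves the lemma in a single step by decomposing over rationals,
\begin{equation*}
\{t_{k+1}\leq t\}=\{h_{k+1}+t_k\leq t\}=\bigcup_{r\in[0,t]\cap\mathbb{Q}}\Big(\{h_{k+1}\leq t-r\}\cap\{t_k\leq r\}\Big),
\end{equation*}
and then applies the defining property \eqref{def:filtration_stopping} of $\mathcal{F}_{t_k}$ to each term $A=\{h_{k+1}\leq t-r\}$ --- exactly the mechanism you single out as the crux of your simple-function step, with the rational $r$ playing the role of your constants $c_j$. The structural difference is where the countable approximation happens: the paper discretises the stopping time $t_k$ by rationals inside the event, whereas you discretise the increment $h_{k+1}$ from below by dyadic simple functions $Y_n\uparrow h_{k+1}$ and pass to the limit via $\{\sup_n(t_k+Y_n)\le t\}=\bigcap_n\{t_k+Y_n\le t\}\in\mathcal{F}_t$. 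What your longer argument buys is exactness at the boundary: the paper's union with non-strict inequalities misses the event $\{t_k+h_{k+1}=t\}\cap\{t_k\notin\mathbb{Q}\}$, because there the only admissible value $r=t_k=t-h_{k+1}$ is irrational, so no rational $r$ witnesses membership; repairing this within the paper's scheme requires either strict inequalities --- which, as you rightly observe, only give $\{t_{k+1}<t\}\in\mathcal{F}_t$ and hence $\{t_{k+1}\le t\}\in\bigcap_{s>t}\mathcal{F}_s$, which is not enough without right-continuity of $(\mathcal{F}_t)_{t\geq 0}$ --- or a monotone-limit argument of the kind you give. In short, the paper's proof buys brevity at the cost of an edge case on the set $\{t_k+h_{k+1}=t\}$, while yours buys full rigour with no assumption on the filtration beyond Definition~\ref{def:stoppingTime}, at the cost of the two-stage (simple, then general) structure.
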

\begin{proof}
By assumption in the statement of the Lemma, $\{t_k \leq t\}\in \mathcal{F}_t$ for all $t\in [0,T]$. By \eqref{def:filtration_stopping} in Definition \ref{def:stoppingTime} we can define the filtration at the stopping time $t_k$ as the $\sigma$-algebra 
\begin{equation}\label{eq:Ftk}
\mathcal{F}_{t_k}:=\{A\in\mathcal{F}:A \cap \{ t_k\leq t\}\in \mathcal{F}_t\text{ for all }t\in[0,T]\}.
\end{equation}
Since $t_n=\sum_{i=1}^{n}h_i$ for $n\in\mathbb{N}\backslash \{0\}$, we have
    \begin{align*}
        \{t_{k+1}\leq t\}=\{h_{k+1}+t_k\leq t\}=\bigcup\limits_{r\in[0,t]\cap\mathbb{Q}}
        \Big(
        \{h_{k+1}\leq t-r\} \cap \{t_k\leq r \} \Big).
    \end{align*}
If we now assume that $h_{k+1}$ is an $\mathcal{F}_{t_k}$-measurable random variable and set $A=\{h_{k+1}\leq t-r\} $, then by \eqref{eq:Ftk} we have $A\cap\{t_k\leq r\}\in\mathcal{F}_r$ which is a sub-$\sigma$-algebra of $\mathcal{F}_t $ for all $r\in[0,T]$. Therefore, $\{t_{k+1}\leq t\}\in\mathcal{F}_t $ for all $t\in[0,T]$.
\end{proof}
\begin{remark}\label{rem:hstopp}
It is a consequence of Lemma \ref{lem:stoppingTimes} that, as long as $h_1$ can be computed from the initial data, and successive adaptive timesteps $h_{k+1}$ are computed on each trajectory as a deterministic function of the numerical approximation at time $t_k$, then every node on the random grid thus constructed is an $\mathcal{F}_t$-stopping-time. 
\end{remark}

\begin{assumption} \label{ass:h}
For the sequence of random timesteps $\{h_{n+1}\}_{n\in\mathbb{N}}$, there are constant values $\hmax>h_{\min}>0$, $\rho>1$ such that $h_{\max}=\rho  h_{\min}$, and
\begin{equation}\label{eq:hmaxhmin}
0<h_{\min} \leq h_{n+1} \leq h_{\max}\leq 1.
\end{equation}
In addition, we assume each $h_{n+1}$ is $\mathcal{F}_{t_n}$-measurable.  
\end{assumption}
The second statement in Assumption \ref{ass:h} on measurability allows us to condition on $\mathcal{F}_{t_n}$ at any point on the random time-set $\{t_n\}_{n\in\mathbb{N}}$, and its validity is a consequence of Remark \ref{rem:hstopp}.

\begin{definition}\label{def:N}
Let $N^{(t)}$ be a random integer such that 
 \begin{align}
 N^{(t)}:=\max\{n\in\mathbb{N}\backslash \{ 0\}: t_{n-1}<t \},   \label{eq:def_N}
 \end{align}
and let $N=N^{(T)}$ and $t_N=T$, so that $T$ is always the last point on the mesh. 
\end{definition}

Each jump time $\tau_{n+1}$ (defined in \eqref{eq:taui}) is $\mathcal{H}_{T}$-measurable and therefore also $\mathcal{F}_{\tau_n}$-measurable. This is ensured if all jump times and sizes can be precomputed, and will be the case when the diffusion and jump processes are independent, and the jump intensity $\lambda$ is constant. However it is not necessary to actually to implement the method in this way.

Let Definition \ref{def:N} be satisfied with $N^{(t)}_{\min}:=\lfloor t/\hmax \rfloor$ and  $N^{(t)}_{\max}:=\lceil t/\hmin + J_\nu(Z,(0,t])\rceil$ where $J_\nu(Z,(0,t])<\infty$ a.s. is the number of jumps of any size on the interval $(0,t]$ (see Remark \ref{rem:J(s)_define}). Notice that $N^{(t)}_{\max}$ is also an $(\mathcal{F}_t)_{t\geq 0}$-measurable random variable, which a major challenge in the proof of Theorem \ref{thm:result_jump} that is specific to the jump setting. The lower bound $\hmin$ given by \eqref{eq:hmaxhmin}, along with the a.s. finiteness of the number of jumps, ensures that a simulation over the interval $[0,T]$ can be completed in a finite number of time steps.

We ensure that we reach the final time by taking $h_{N}=T-t_{N-1}$ as our final step, and use the backstop method if $h_{N}<\hmin$. In the absence of jumps (i.e. if $\gamma\equiv 0$) the last step is the only occasion of the backstop method taking on a step that might be smaller than $h_{\min}$. Otherwise this can occur at any step during the process, because according to the jump adapted adaptive time-stepping strategy in \eqref{eq:defh_jump} the lower bound of each step is the distance to the next jump time rather than the $\hmin$ that we choose, which might be smaller than $\hmin$. 

\subsection{The class of Jump Adapted-Adaptive Methods (\JAAM)}
Now we introduce a continuous version of the \JAAM class of schemes characterised by the sequence of tuples $\big\{\big(\Yjump{s}\big)_{s\in[t_n,t_{n+1}]}, \hjump{n+1} \big\}_{n\in\mathbb{N}}$ with individual terms in the sequence composed of the continuous version of the approximate solution over each time-step along with the length of the corresponding timestep. The corresponding discrete approximation is given by setting $s=t_n$ for each term in the sequence.
\subsubsection{The main and backstop maps}
Our approach is a hybrid of two methods, each associated with maps $\mainMap:\mathbb{R}^d\times[0,T]\times[0,h_{\max}]$ and $\bsMap:\mathbb{R}^d\times[0,T]\times[0,h_{\max}]$. We will call $\mainMap$ the main, or default, map and $\bsMap$ the backstop map. The hybridisation of two maps is motivated by the adaptive approach proposed in \cite{kelly2018adaptive}, where  $\mainMap$ corresponds to an efficient scheme which is convergent as an adaptive method but not as a fixed-step method and $\bsMap$ corresponds to a method which is convergent as a fixed step method but which may be inefficient or may induce distortions in solution dynamics. Our aim is that it should be used only rarely: this is the subject of Theorem \ref{thm:backstopProb}. Examples of potential maps $\mainMap$ and $\bsMap$ are given in Section \ref{sec:Mil} and implemented in Section \ref{sec:numerics}.

We characterise $\mainMap$ and $\bsMap$ through their action over a single step, requiring them to satisfy a certain mean-square consistency bound in the absence of jumps. 
\begin{definition}
For $s\in[t_n,t_{n+1}]$, $n\in\mathbb{N}$ and any $x,y\in\mathbb{R}^d$, let
\begin{eqnarray}
    \EThetaJump{s}&:=& \Big( x -  \mainMap\left(y\boldsymbol{,}\,\,t_n\boldsymbol{,}\,\, s -t_n \right) \Big)\mathbf{1}_{\{h_{\min}<\hjump{n+1}\leq h_{\max}\}}\label{eq:defE(r)_jump}\\
\EVarphiJump{s}&:=&\Big( x - \bsMap\left(y\boldsymbol{,}\,\,t_n\boldsymbol{,}\,\, s -t_n \right)\Big)\mathbf{1}_{\{\hjump{n+1}\leq h_{\min}\}}.\nonumber 
\end{eqnarray}
\end{definition}
Then we impose the following assumption:

\begin{assumption}\label{ass:one-stepBounds}
We suppose that $\mainMap,\bsMap :\mathbb{R}^d\times[0,T]\times[0,h_{\max}] \rightarrow \mathbb{R}^d$ satisfy, for some $\order>0$, both $\psi=\mainMap$ or $\psi=\bsMap$, and any a.s. finite $\mathcal{F}_{t_n}$-measurable $\mathbb{R}^d$-valued random variables  $A_n,B_n$,
\begin{multline}
    \mathbb{E}\Big[\Big\|  \Epsi{t_{n+1}}{A_n}{B_n}\Big\|^2 \Big|\mathcal{F}_{t_n}\Big] \leq \|A_n-B_n\|^2 \\
    +\Gamma_{1}\int_{t_n}^{t_{n+1}} \mathbb{E}\left[\big\|\Epsi{r}{A_n}{B_n}\big\|^2 \middle|\mathcal{F}_{t_n}\right]dr+\GamRX\,{h_{n+1}^{2\order+1}}, \quad a.s.  \label{eq:Case_I_twostep}
\end{multline} 
where $\Gamma_{1}<\infty$ is constant, $\GamRX$ is a scalar $\mathcal{F}_{t_n}$-measurable random variable with finite expectation denoted $\GamR$, and both $\Gamma_1$ and $\GamRX$ are independent of $\hmax$. 
\end{assumption}
\begin{remark}
This bound describes the effect of the application over a single step of either of the non-jump maps $\mathcal{M}$ or $\mathcal{B}$ to the solution of the SJDE at the beginning of the step $X^J(t_n)$. In the (non-jump) SDE case \eqref{eq:true} we would have $A_n = X(t_n)$ and $B_n = X_n$ where, for example, $X_n$ is a Milstein approximation. Thus \eqref{eq:Case_I_twostep} would correspond to a local error estimate (similar to those in \cite{tretyakov2013fundamental} and proved in \cite{KLSBit}). In our setting we wish to exploit this kind of non-jump SDE estimate where the initial inputs from the exact and approximate processes come from the SJDE but the estimate \eqref{eq:Case_I_twostep} can nonetheless be checked in the SDE setting. 
Hence we introduce the random variables $A_n$ (corresponding to $\Xjump{t_n}$) and $B_n$ (corresponding to the numerical approximation of the SJDE at time $t_n$).
\end{remark}
For convenience we rewrite \eqref{eq:true_jump} as for $s\in[t_n, t_{n+1}]$,  $n\in\mathbb{N}$
\begin{align}
\XjumpBefore{s}:=& \Xjump{t_n}+\int_{t_n}^{s}f(\Xjump{r})dr +\sum_{i=1}^{m}\int_{t_n}^{s}g_i(\Xjump{r})dW_i(r);\label{eq:Xjump_true_1}\\
\Xjump{s}=& \XjumpBefore{s^-}+ \int_Z\gamma\big(z,\XjumpBefore{s^-}\big) J_{\nu}(dz\times \{s\}). \label{eq:Xjump_true_2}
\end{align}

We may now define our hybrid scheme in terms of $\mainMap,\bsMap$, and their action over each step before and after a jump.

\begin{definition}[\JAAM Scheme] \label{def:j-adapted adaptive explicit Milstein scheme}
Let  $\{h_{n+1}\}_{n\in \mathbb{N}}$ satisfy Assumption \ref{ass:h}. 
We define the continuous form of a \emph{\JAAM scheme} associated with  $\{h_{n+1}\}_{n\in\mathbb{N}}$
\begin{align} 
\YjumpBefore{s}:=\,\,&\mainMap\left(\Yjump{t_n} \boldsymbol{,}\,\,  t_n\boldsymbol{,}\,\,s-t_n\right) \cdot \mathbf{1}_{\{h_{\min}<\hjump{n+1} \leq h_{\max}\}}\label{eq:AT_twoStep_1}\\
&\quad+\bsMap\left(\Yjump{t_n} \boldsymbol{,}\,\,t_n\boldsymbol{,}\,\,s-t_n\right)  \cdot \mathbf{1}_{\{\hjump{n+1} \leq h_{\min}\}},\nonumber \\
\Yjump{s}:=\,\,&\YjumpBefore{s^-}+\int_{Z}\gamma\big(z,\YjumpBefore{s^-}\big)J_{\nu}(dz\times\{s\}) 
\label{eq:AT_twoStep}
\end{align}
for $s\in[t_n,t_{n+1}]$, $n\in\mathbb{N}$. Here, $\Yjump{0}=X^J(0)$, and
\begin{equation}\label{eq:jump_indicator}
\int_{Z}\gamma\big(z,\YjumpBefore{s^-}\big)J_{\nu}(dz\times\{s\})=\left\{\begin{array}{ll}
\gamma\left(\zeta_{\NumJumps{s}},\YjumpBefore{s^-}\right),& s=\tau_{\NumJumps{s}};\\
0,& s\neq \tau_{\NumJumps{s}}.
\end{array}\right.
\end{equation}
\end{definition}

Now we define notation for the global error of the scheme. 
\begin{definition}
For any $s\in[0,T]$, we distinguish between the error acquired by \JAAM up to but not including a possible jump time
\begin{equation}\label{eq:defE(r)_combined_1} 
\EjumpBefore{s}:=\XjumpBefore{s^-}-\YjumpBefore{s^-};
\end{equation}
and at a possible jump time $s$
\begin{align}
   \Ejump{s}:=\,& \Xjump{s}-\Yjump{s} \nonumber\\
   =\,&\EjumpBefore{s}+\int_Z \Delta \gamma \Big(z, \XjumpBefore{s^-}, \YjumpBefore{s^-} \Big)J_{\nu}(dz\times\{s\}), \label{eq:defE(r)_combined_jump}
\end{align}
where 
\begin{align}\label{eq:errorGamma}
    \Delta \gamma \Big(z,\XjumpBefore{s^-}, \YjumpBefore{s^-} \Big):=\,&\gamma \Big(z,\XjumpBefore{s^-} \Big) -\gamma \Big(z,\YjumpBefore{s^-}\Big). 
\end{align}
\end{definition}
Notice that $J_{\nu}(dz\times\{s\})=0$ when $s$ is not a jump time.

\section{Strong convergence of \JAAM}
\label{sec:result_jump}
For any adaptive numerical scheme that satisfies local error estimates of the form given in Assumption \ref{ass:one-stepBounds} for the non-jump SDE \eqref{eq:true}, our main result demonstrates strong $L_2$ convergence when extended to the jump SDE \eqref{eq:true_jump} via a jump adapted mesh. We give the adaptive Milstein method as an example of $\mainMap$ in Section \ref{sec:Mil}, leading to a specific Jump Adapted-Adaptive Milstein Method (\JAAMM) scheme of order $\delta=1$. 
\begin{theorem}[Strong Convergence] \label{thm:result_jump}
Let $(\Xjump{t})_{t\in[0,T]}$ be a solution of \eqref{eq:true_jump} with initial value $\Xjump{0}= \XjumpStart\in\mathbb{R}^d$ and that Assumptions \ref{ass:f+g} and \ref{ass:jump_gamma} hold. 

Let $\big\{\big(\Yjump{s}\big)_{s\in[t_n,t_{n+1}]},\hjump{n+1}\big\}_{n\in\mathbb{N}}$ be a numerical scheme from the \JAAM class as characterised in Definition \ref{def:j-adapted adaptive explicit Milstein scheme} with $\YjumpStart = \XjumpStart$ such that Assumption \ref{ass:one-stepBounds} holds with some $\order>0$. 
Then there exists a constant $\finalC{T} > 0$ such that 
\begin{equation*} 
\max_{t\in[0,T]}\Big(\mathbb{E}\Big[\|\Xjump{t}-\Yjump{t}\|^2\Big]\Big)^{1/2} \leq \finalC{T}\,\hmax^\order.
\end{equation*}
\end{theorem}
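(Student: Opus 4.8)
The plan is to propagate the mean-square error across the jump-adapted mesh by combining the single-step bound of Assumption \ref{ass:one-stepBounds} on each diffusive subinterval with a multiplicative amplification at each jump node, and then to close a Grönwall-type iteration after conditioning on the jump-time information. On a single step $[t_n,t_{n+1}]$, by \eqref{eq:Xjump_true_1} the pre-jump process $\XjumpBefore{\cdot}$ is the jump-free SDE \eqref{eq:true} started from $\Xjump{t_n}$, while $\YjumpBefore{\cdot}$ is the active map applied to $\Yjump{t_n}$; hence the pre-jump error $\EjumpBefore{\cdot}$ is precisely the family $\big(\Epsi{s}{\Xjump{t_n}}{\Yjump{t_n}}\big)_{s\in[t_n,t_{n+1}]}$ governed by Assumption \ref{ass:one-stepBounds}, with $\psi=\mainMap$ or $\psi=\bsMap$ selected by the two complementary step-size indicators (so exactly one instance of \eqref{eq:Case_I_twostep} is active per step). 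Applying \eqref{eq:Case_I_twostep} with running terminal point $s\in[t_n,t_{n+1}]$ and discharging the $\Gamma_1\int_{t_n}^{s}(\cdots)$ term by the integral form of Grönwall's inequality in $s$ gives the one-step estimate
\begin{equation*}
\expect{\big\|\EjumpBefore{t_{n+1}}\big\|^2 \,\middle|\, \mathcal{F}_{t_n}} \le e^{\Gamma_1\hjump{n+1}}\Big(\big\|\Ejump{t_n}\big\|^2 + \GamRX\,\hjump{n+1}^{2\order+1}\Big),
\end{equation*}
valid for both maps.

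Next I would insert the jump. At a jump node $t_{n+1}=\tau_k$, \eqref{eq:defE(r)_combined_jump}--\eqref{eq:errorGamma} give $\Ejump{t_{n+1}}=\EjumpBefore{t_{n+1}}+\Delta\gamma$, and expanding $\|\cdot\|^2$ produces a quadratic term and a cross term in $\Delta\gamma$. The essential point is that the incoming mark $\zeta_k$ affects the solution only from $\tau_k$ onward, so it is independent of the pre-jump values $\XjumpBefore{t_{n+1}^-},\YjumpBefore{t_{n+1}^-}$; averaging over $\zeta_k\sim\nu/\lambda$ and invoking the integrated Lipschitz bound \eqref{eq:monotone_gamma} of Assumption \ref{ass:jump_gamma} (for the quadratic term) together with Cauchy--Schwarz (for the cross term) yields a mean-square amplification by the deterministic factor $\kappa:=\big(1+\sqrt{\Czeta{1}/\lambda}\big)^2$. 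This averaging must use the integrated condition \eqref{eq:monotone_gamma} rather than a pointwise-in-$z$ bound, so the jump size has to be kept random at this point and cannot be frozen.

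To globalise, I would condition on the $\sigma$-algebra $\mathcal{T}$ generated by the jump \emph{times} alone. Given $\mathcal{T}$ the jump count $\NumJumps{T}$ and the lengths of the diffusive pieces are fixed, while the marks $\zeta_k$ remain random and independent, so the fresh-mark averaging above still applies. Chaining the one-step estimate across the adaptive nodes inside each piece and multiplying by $\kappa$ at each of the $\NumJumps{T}$ jumps produces a recursion with \emph{deterministic} coefficients whose total growth factor is bounded by $e^{\Gamma_1 T}\kappa^{\NumJumps{T}}$ (using $\sum_n\hjump{n+1}=T$). Starting from $\Ejump{0}=0$, bounding $\hjump{n+1}^{2\order+1}\le\hmax^{2\order}\hjump{n+1}$, and using $\sum_n\hjump{n+1}=T$ with a uniform-in-$n$ moment bound on the local-error coefficients (so that $\expect{\sum_n\GamRX\hjump{n+1}\mid\mathcal{T}}$ is controlled by $\GamR\,T$) yields
\begin{equation*}
\expect{\big\|\Ejump{T}\big\|^2 \,\middle|\, \mathcal{T}} \le \GamR\,T\,e^{\Gamma_1 T}\,\hmax^{2\order}\,\kappa^{\NumJumps{T}}.
\end{equation*}
Taking expectations and using the Poisson moment generating identity $\expect{\kappa^{\NumJumps{T}}}=e^{\lambda T(\kappa-1)}$ turns the right-hand side into $\finalC{T}^2\,\hmax^{2\order}$, where the dependence on $\lambda$ and $T$ enters through $e^{\lambda T(\kappa-1)}$ and on $\rho$ through the mesh constants; since the same argument runs with $T$ replaced by any $t\le T$ (and interior points are covered by the within-step Grönwall estimate), taking square roots delivers the stated uniform bound.

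I expect the main obstacle to be exactly the feature flagged before the theorem: the numbers of mesh nodes $N$ and of jumps $\NumJumps{T}$ are random and $(\mathcal{F}_t)$-measurable, and since the mesh is jump-adapted it is itself driven by the jump schedule, so the amplification $\kappa^{\NumJumps{T}}$ and the accumulated local error are not independent. The device that resolves this is conditioning on the jump-time $\sigma$-algebra $\mathcal{T}$, which simultaneously fixes $\NumJumps{T}$ and the piece lengths (making the chained growth factor deterministic) yet preserves the independence of the fresh marks needed to apply \eqref{eq:monotone_gamma}, so that the Poisson structure is exploited only at the very last step. Establishing the uniform moment control on $\GamRX$ over the random mesh that keeps $\expect{\kappa^{\NumJumps{T}}\sum_n\GamRX\hjump{n+1}}$ finite and of order $\hmax^{2\order}$ is the heaviest supporting ingredient.
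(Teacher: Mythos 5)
Your proposal is correct in outline and arrives at the stated bound, but it takes a genuinely different route from the paper. The paper never discharges the Gr\"onwall term step by step: it keeps the integral terms from \eqref{eq:Case_I_twostep} intact, treats the jump contribution \emph{additively} --- at each jump time the extra error is bounded by $(1+2\Czeta{1})\,\mathbb{E}\big[\|\EjumpBefore{\tau_{k+1}}\|^2\,\big|\,\cdot\,\big]$ and carried along as a separate sum --- then telescopes the one-step bounds over the whole random mesh conditional on $\mathcal{H}_T$ (jump times \emph{and} sizes), converts the accumulated jump sum back into a time integral via the Poisson-random-measure identity \eqref{eq:jump_sum=int} together with $\nu(Z)=\lambda$, and applies a single continuous Gr\"onwall inequality at the very end with rate $\Gamma_1+\lambda(1+2\Czeta{1})$, yielding the constant \eqref{eq:CJRrhot}. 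You instead apply Gr\"onwall inside each step, turn each jump into a multiplicative amplification $\kappa=\big(1+\sqrt{\Czeta{1}/\lambda}\big)^2$, condition on the jump \emph{times} only, and close with the Poisson moment generating identity $\mathbb{E}\big[\kappa^{\NumJumps{T}}\big]=e^{\lambda T(\kappa-1)}$. Both routes produce a constant exponential in $\lambda T$ and both must confront the random, $\mathcal{F}_t$-measurable number of mesh points; your multiplicative-plus-MGF mechanism is arguably more probabilistically transparent, while the paper's additive/telescoping mechanism is what lets it absorb the jump error into the final Gr\"onwall rate and is where the $\rho$-dependence of $\finalC{T}$ enters, through the bound $\pi_{k+1}/\hmin+1$ (with $\hmax=\rho\hmin$) on the number of steps per inter-jump block.

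Two points in your sketch need firming up. First, your fresh-mark independence claim --- that conditional on the jump times, $\zeta_k$ is independent of the pre-jump states --- holds for the concrete strategy \eqref{eq:defh_jump}, but is not automatic in the generality of the theorem: Assumption \ref{ass:h} only requires $h_{n+1}$ to be $\mathcal{F}_{t_n}$-measurable, and $\mathcal{F}_{t_n}\supseteq\mathcal{H}_T$ contains \emph{all} jump sizes, so an admissible scheme's trajectory before $\tau_k$ is allowed to depend on $\zeta_k$. The paper sidesteps this by conditioning on $\mathcal{H}_T$ outright (at the price of invoking \eqref{eq:monotone_gamma} with the mark effectively frozen, which strictly requires the pointwise-in-$z$ Lipschitz property of $\gamma$ asserted in the introduction); your route requires either restricting to step-size rules that do not peek at future marks, or the same pointwise bound. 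Second, your assertion that $\mathbb{E}\big[\sum_n\GamRX h_{n+1}\,\big|\,\mathcal{T}\big]$ is controlled by $\GamR T$ does not follow from the uniform bound on unconditional expectations alone, since the number of summands is random and correlated with the $\GamRX$; the paper's resolution --- bounding the per-block step count by $\pi_{k+1}/\hmin+1$, using $\mathcal{H}_T\subseteq\mathcal{F}_{t_n}$ and the tower property --- is the ingredient you would need to import, and it is precisely the ``heaviest supporting ingredient'' you anticipated.
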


The proof is best approached in five distinct steps. Step 1 reiterates a one-step error bound that applies over any step where no jump has taken place. Step 2 aggregates these one-step errors over a block of steps between two jumps, including the steps from the initial time to the first jump. Step 3 then aggregates the errors from Step 2 to provide an error estimate that applies from the initial time $t_0$ to the time of the last jump. Step 4 provide an error from the time of the last jump (at $\tau_{\bar N_{t}}$) to any time $t\in(\tau_{\bar N_{t}},T]$. Step 5 combines the error estimates from Steps 3 and 4. An application of the continuous Gr\"onwall inequality gives the statement of Theorem \ref{thm:result_jump}.

\begin{proof}
\textbf{\textit{Step 1 --  One-step error bound.}} 
By \eqref{eq:Case_I_twostep} in Assumption \ref{ass:one-stepBounds}, and setting $A_n=\XjumpBefore{t_n}$ and $B_n=\YjumpBefore{t_n}$, by \eqref{eq:defE(r)_combined_1} we have the one-step error bound prior to the inclusion of any jump, regardless of whether the main map $\mathcal{M}$ or the backstop map $\mathcal{B}$ has been used (see Definition \ref{def:j-adapted adaptive explicit Milstein scheme}) we have 
\begin{align}\label{eq:combine_error_noJ}
\mathbb{E}\Big[\big\|\EjumpBefore{t_{n+1}}\big\|^2 \Big|\mathcal{F}_{t_n}\Big] \leq \big\|\EjumpBefore{t_n}\big\|^2+ \Gamma_1\int_{t_n}^{t_{n+1}} \mathbb{E}\Big[\big\|  \EjumpBefore{r}\big\|^2\Big|\mathcal{F}_{t_n} \Big]dr +\GamRX{\hmax^{2\order+1}}. 
\end{align}

\textbf{\textit{Step 2 -- Jump-to-jump error bound.}} In this step, we calculate the error bound from \nth{k} jump to \nth{(k+1)} jump, that is from time $\tau_{k}$ to $\tau_{k+1}$, for $k\in[0,\NumJumps{t}]$. Firstly, by \eqref{eq:defE(r)_combined_1} and \eqref{eq:defE(r)_combined_jump} we have
\begin{equation}\label{eq:J2J_start}
    \Ejump{\tau_{k+1}}=\EjumpBefore{\tau_{k+1}}+\int_Z \Delta \gamma \Big(z,\XjumpBefore{\tau_{k+1}},\YjumpBefore{\tau_{k+1}} \Big)J_{\nu}(dz\times\{\tau_{k+1}\}).
\end{equation}
Since $\tau_{k+1}$ is a jump time, by \eqref{eq:jump_indicator} we have 
\begin{equation*}
    \int_Z \Delta \gamma \Big(z,\XjumpBefore{\tau_{k+1}^-},\YjumpBefore{\tau_{k+1}^-} \Big)J_{\nu}(dz\times\{\tau_{k+1}\}) =  \Delta \gamma \Big(\zeta_{k+1},\XjumpBefore{\tau_{k+1}^-},\YjumpBefore{\tau_{k+1}^-} \Big).
\end{equation*}
Taking norm squared on the both sides of \eqref{eq:J2J_start} followed by an application of Jensen's inequality we have
\begin{equation}\label{eq:E[tau_k+1]}
    \left\|\Ejump{\tau_{k+1}}\right\|^2\leq 2\left\|\EjumpBefore{\tau_{k+1}}\right\|^2+2\left\|\Delta \gamma \Big(\zeta_{k+1},\XjumpBefore{\tau_{k+1}^-},\YjumpBefore{\tau_{k+1}^-} \Big)\right\|^2.
\end{equation}
Taking expectation on the both sides of \eqref{eq:E[tau_k+1]} conditioned on $\mathcal{F}_{t_{N^{(\tau_{k+1})}-1}}$, and by \eqref{eq:monotone_gamma} we have 
\begin{multline*}
    \mathbb{E}\left[\left\|\Ejump{\tau_{k+1}}\right\|^2\middle| \mathcal{F}_{t_{N^{(\tau_{k+1})}-1}}\right]\leq 2\mathbb{E}\left[\left\|\EjumpBefore{\tau_{k+1}}\right\|^2\middle| \mathcal{F}_{t_{N^{(\tau_{k+1})}-1}}\right]
\\+2\mathbb{E}\left[\left\|\Delta \gamma \Big(\zeta_{k+1},\XjumpBefore{\tau_{k+1}^-},\YjumpBefore{\tau_{k+1}^-} \Big)\right\|^2\middle| \mathcal{F}_{t_{N^{(\tau_{k+1})}-1}}\right]. 
\end{multline*}
For the 2nd term on the RHS, by \eqref{eq:monotone_gamma} we have
\begin{multline}\label{eq:before_BIT}
    \mathbb{E}\left[\left\|\Ejump{\tau_{k+1}}\right\|^2\middle| \mathcal{F}_{t_{N^{(\tau_{k+1})}-1}}\right] \,\,\leq\,\,  \mathbb{E}\left[\left\|\EjumpBefore{\tau_{k+1}}\right\|^2\middle| \mathcal{F}_{t_{N^{(\tau_{k+1})}-1}}\right]
\\+(1+2\Czeta{1})\mathbb{E}\left[\left\|\EjumpBefore{\tau_{k+1}}\right\|^2\middle| \mathcal{F}_{t_{N^{(\tau_{k+1})}-1}}\right]. 
\end{multline}
By setting $t_{n}=t_{N^{(\tau_{k+1})}-1}$ and $t_{n+1} = \tau_{k+1}$ in \eqref{eq:combine_error_noJ} we have an error bound over the last step to the jump time $\tau_{k+1}$. 
Substituting back into \eqref{eq:before_BIT} to replace the 1st term on the RHS, we have
\begin{multline}\label{eq:combine_error_withJ}
\mathbb{E}\Big[\big\|\Ejump{\tau_{k+1}}\big\|^2 \Big|\mathcal{F}_{t_{N^{(\tau_{k+1})}-1}}\Big] \,\,\leq\,\, \big\|\Ejump{t_{N^{(\tau_{k+1})}-1}}\big\|^2  
\\+ \Gamma_1\int_{t_{N^{(\tau_{k+1})}-1}}^{\tau_{k+1}} \mathbb{E}\Big[\big\|   \Ejump{r}\big\|^2\Big|\mathcal{F}_{t_{N^{(\tau_{k+1})}-1}} \Big]dr
\\+\GamRX\hmax^{2\order+1}+(1+2\Czeta{1})\mathbb{E}\left[\left\| \EjumpBefore{\tau_{k+1}}\right\|^2\Big|\mathcal{F}_{t_{N^{(\tau_{k+1})}-1}} \right]. 
\end{multline}
Multiplying by the indicator function on the both sides of \eqref{eq:combine_error_noJ} and \eqref{eq:combine_error_withJ}, and since $\EjumpBefore{t_{n+1}}=\Ejump{t_{n+1}}$ for $n\in[N^{(\tau_k)},N^{(\tau_{k+1})}-2]$, we sum up all the steps from $\tau_k$ to $\tau_{k+1}$ to have 
\begin{eqnarray}
\mathcal{K}_k&:=&\sum_{n=N^{(\tau_k)}}^{N^{(\tau_{k+1})}-1}\bigg(\mathbb{E}\Big[\big\| \Ejump{t_{n+1}}\big\|^2  \Big|\mathcal{F}_{t_n}\Big]-\big\|\Ejump{t_{n}}\big\|^2  \bigg)\mathcal{I}_{\{N^{(\tau_{k+1})}>n\}}\nonumber  \\
&\leq&\Gamma_1\sum_{n=N^{(\tau_k)}}^{N^{(\tau_{k+1})}-1}\int_{t_n}^{t_{n+1}} \mathbb{E}\Big[\big\|   \Ejump{r}\big\|^2\Big|\mathcal{F}_{t_n} \Big]dr\,\mathcal{I}_{\{N^{(\tau_{k+1})}>n\}}\label{eq:LHS_define}
\\&&+\GamRX \sum_{n=N^{(\tau_k)}}^{N^{(\tau_{k+1})}-1}\hmax^{2\order+1}\mathcal{I}_{\{N^{(\tau_{k+1})}>n\}}\nonumber
\\&&+(1+2\Czeta{1})\mathbb{E}\left[\big\|\EjumpBefore{\tau_{k+1}} \big\|^2\middle|\mathcal{F}_{t_{N^{(\tau_{k+1})}-1}}\right]. \nonumber
\end{eqnarray}
For the integrand in the first term on the RHS of \eqref{eq:LHS_define}, by Definition \ref{def:N}, the filtration $\mathcal{F}_{t_n}$ may be written as $\mathcal{F}_{t_{N^{(r)}-1}}$, for $r\in[t_n,t_{n+1})$. By bounding the number of steps between the \nth{k} and \nth{(k+1)} above by $ \pi_{k+1}/\hmin+1$, and bounding all indicator functions above by 1 we have
\begin{multline}\label{eq:oneJump_error}
\mathcal{K}_k\leq\Gamma_1 \int_{\tau_k}^{\tau_{k+1}} \mathbb{E}\bigg[\big\|   \Ejump{r}\big\|^2\bigg|\mathcal{F}_{t_{N^{(r)}-1}} \bigg]dr +\GamRX\left(\rho\pi_{k+1}+1 \right)\hmax^{2\order}\\
    +(1+2\Czeta{1})\mathbb{E}\left[\big\|\EjumpBefore{\tau_{k+1}} \big\|^2\middle|\mathcal{F}_{t_{N^{(\tau_{k+1})}-1}}\right], 
\end{multline}
which is the error bound between the \nth{k} and the \nth{(k+1)} jump, with LHS defined in \eqref{eq:LHS_define}.

\textbf{\textit{Step 3 -- $t_0$ to last jump error bound. }} 
Summing up \eqref{eq:oneJump_error} over $k$ for all jumps that have occured on the interval $(0,t]$, denoted $\NumJumps{t}$ we have 
\begin{multline}
    \sum_{k=0}^{\NumJumps{t}-1}\mathcal{K}_k
    \leq  \Gamma_1 \sum_{k=0}^{\NumJumps{t}-1}\int_{\tau_k}^{\tau_{k+1}} \mathbb{E}\bigg[\big\|   \Ejump{r}\big\|^2\bigg|\mathcal{F}_{t_{N^{(r)}-1}} \bigg]dr\\ +\GamRX\sum_{k=0}^{\NumJumps{t}-1}\left(\rho\pi_{k+1}+1 \right)\hmax^{2\order}\\
    +(1+2\Czeta{1})\sum_{k=0}^{\NumJumps{t}-1}\mathbb{E}\left[\big\|\EjumpBefore{\tau_{k+1}} \big\|^2\middle|\mathcal{F}_{t_{N^{(\tau_{k+1})}-1}}\right]. \label{eq:0_to_J(t)}
\end{multline}
Notice that we set $\tau_0=t_0$ so that \eqref{eq:0_to_J(t)} includes the time from $t_0$ to the first jump $\tau_1$.  Taking expectation on the both sides of \eqref{eq:0_to_J(t)}, conditioned on $\mathcal{H}_T$ (see Section \ref{sec:adpt_strategy_SDE_jump}) that only contains the information of all jump times and jump sizes known by time $T$, we have  
\begin{multline*}
   \mathbb{E}\left[ \sum_{k=0}^{\NumJumps{t}-1}\mathcal{K}_k\middle| \mathcal{H}_T\right] \leq\,\,  \Gamma_1 \mathbb{E}\left[\sum_{k=0}^{\NumJumps{t}-1}\int_{\tau_k}^{\tau_{k+1}} \mathbb{E}\bigg[\big\|   \Ejump{r}\big\|^2\bigg|\mathcal{F}_{t_{N^{(r)}-1}} \bigg]dr\middle| \mathcal{H}_T\right]
\\+\mathbb{E}\left[\GamRX\sum_{k=0}^{\NumJumps{t}-1}\left(\rho\pi_{k+1}+1 \right)\middle| \mathcal{H}_T\right]\hmax^{2\order}
\\+(1+2\Czeta{1})\mathbb{E}\left[\sum_{k=0}^{\NumJumps{t}-1}\mathbb{E}\left[\big\|\EjumpBefore{\tau_{k+1}} \big\|^2\middle|\mathcal{F}_{t_{N^{(\tau_{k+1})}-1}}\right]\middle| \mathcal{H}_T\right].
\end{multline*}
Since $\NumJumps{t}$ is $\mathcal{H}_T$-measurable (see Definition \ref{def:N_bar}), we can take out the summation on the RHS out of the conditional expectation. By $\mathcal{H}_T\subseteq\mathcal{F}_{t_{N^{(r)}-1}}$ for $r\in[0,T]$, with  the tower property and $\tau_{\NumJumps{t}}=\sum_{i=1}^{\NumJumps{t}}\pi_i$  we have
\begin{multline}
   \mathbb{E}\left[ \sum_{k=0}^{\NumJumps{t}-1}\mathcal{K}_k\middle| \mathcal{H}_T\right]
    \leq\,\,  \Gamma_1 \int_{0}^{\tau_{\NumJumps{t}}} \mathbb{E}\bigg[\big\|   \Ejump{r}\big\|^2 \bigg| \mathcal{H}_T\bigg]dr+\Gamma^J_2\left(\rho\tau_{\NumJumps{t}}+1 \right)\hmax^{2\order}\\
    +(1+2\Czeta{1})\sum_{k=0}^{\NumJumps{t}-1}\mathbb{E}\left[\big\|\EjumpBefore{\tau_{k+1}}\big\|^2\middle|\mathcal{H}_T\right], \label{eq:after_HT}
\end{multline}
where $\Gamma^J_2$ is defined in the statement following \eqref{eq:combine_error_noJ}. 

\textbf{\textit{Step 4 --  $\tau_{\NumJumps{t}}$ to $t$ error bound.}} Notice that in the case that the last jump lands at the target time, i.e. $\tau_{\NumJumps{t}}=t$, the whole of Step 4 is not needed. The period reaching $t$ after the last jump at $\tau_{\NumJumps{t}}$ consist of errors only for the diffusion process, so that $\EjumpBefore{t_{n+1}}=\Ejump{t_{n+1}}$ for $n\in[N^{(\tau_{\NumJumps{t}})}, t_{N^{(t)}-1}]$. Since $t\in\big[t_{N^{(t)}-1},t_{N^{(t)}}\big]$, by replacing $t_n,t_{n+1}$ with $t_{N^{(t)}-1}$ and $t$ respectively in \eqref{eq:combine_error_noJ}, we have the last step reaching $t$ as
\begin{multline}
    \mathbb{E}\Big[\big\|\Ejump{t}\big\|^2 \Big|\mathcal{F}_{t_{N^{(t)}-1}}\Big] \leq \big\|\Ejump{t_{N^{(t)}-1}}\big\|^2\\
    +\Gamma_1\int_{t_{N^{(t)}-1}}^{t} \mathbb{E}\Big[\big\|\Ejump{r}\big\|^2 \Big|\mathcal{F}_{t_{N^{(t)}-1}}\Big]dr
+\GamRX {\big|t-t_{N^{(t)}-1}\big|^{2\order+1}}. \label{eq:err_extra}
\end{multline}
Multiplying the indicator function on the both sides of \eqref{eq:combine_error_noJ}, summing up to $t$ with the last step \eqref{eq:err_extra} and taking the expectation conditioned on $\mathcal{H}_T$ we have
\begin{align*}\label{eq:LHSlast_define}
\mathcal{R}(t)\,\,:=\,\, & \mathbb{E}\Bigg[\,\sum_{n=N^{(\tau_{\NumJumps{t}})}}^{N^{(t)}-2}\Big(\mathbb{E}\Big[\big\|\Ejump{t_{n+1}}\big\|^2 \Big|\mathcal{F}_{t_n}\Big]-\big\|\Ejump{t_{n}}\big\|^2\Big)\mathbf{1}_{\{N^{(t)}> n+1\}} \numberthis
\\
&\qquad\qquad\quad+\mathbb{E}\Big[\big\|\Ejump{t}\big\|^2 \Big|\mathcal{F}_{t_{N^{(t)}-1}}\Big]-\big\|\Ejump{t_{N^{(t)}-1}}\big\|^2 \Bigg|\mathcal{H}_T \Bigg] 
\\ 
\leq\,\,&\Gamma_1\,\mathbb{E}\Bigg[\,  \sum_{n=N^{(\tau_{\NumJumps{t}})}}^{N^{(t)}-2}\int_{t_n}^{t_{n+1}}\mathbb{E}\Big[\big\| \Ejump{r}\big\|^2 \Big|\mathcal{F}_{t_n}\Big]\mathbf{1}_{\{N^{(t)}> n+1\}}dr 
\\
&\qquad\qquad\qquad\qquad+\int_{t_{N^{(t)}-1}}^{t} \mathbb{E}\Big[\big\| \Ejump{r}\big\|^2 \Big|\mathcal{F}_{t_{N^{(t)}-1}}\Big]dr\Bigg|\mathcal{H}_T\Bigg] 
\\
&+\mathbb{E}\Bigg[\GamRX \Bigg(\sum_{n=N^{(\tau_{\NumJumps{t}})}}^{N^{(t)}-2} {h_{n+1}^{2\order+1}}\mathbf{1}_{\{N^{(t)}> n+1\}}+{\big|t-t_{N^{(t)}-1}\big|^{2\order+1}}\Bigg)\Bigg|\mathcal{H}_T\Bigg]
\end{align*}
By summing the integrals on the RHS of \eqref{eq:LHSlast_define}, applying the tower property of conditional expectations, and bounding indicator functions above by 1 we get 
\begin{align*}
\mathcal{R}(t)\leq \Gamma_1 \displaystyle \int_{\tau_{\NumJumps{t}}}^{t}\mathbb{E}\Big[\big\| \Ejump{r}\big\|^2 \Big|\mathcal{H}_{T}\Big]dr+\Gamma^J_2\big(\rho (t-\tau_{\NumJumps{t}}) +1\big) {\hmax^{2\order}}. \numberthis \label{eq:last_period}
\end{align*}

\textbf{\textit{Step 5 -- $t_0$ to $t$ error bound.}} Adding the error bound after the last jump $\mathcal{R}(t)$ (defined in \eqref{eq:LHSlast_define} and bounded in \eqref{eq:last_period}) to the error bound at the last jump \eqref{eq:after_HT} we have 
\begin{multline}
   \mathbb{E}\left[ \sum_{k=0}^{\NumJumps{t}-1}\mathcal{K}_k\middle| \mathcal{H}_T\right]+ \mathcal{R}(t)
    \leq\,\,  \Gamma_1\int_{0}^{t} \mathbb{E}\bigg[\big\|   \Ejump{r}\big\|^2 \bigg| \mathcal{H}_T\bigg]dr\\
    +\Gamma^J_2\left(\rho t+1 \right){\hmax^{2\order}}+(1+2\Czeta{1})\sum_{k=0}^{\NumJumps{t}-1}\mathbb{E}\left[\big\|\EjumpBefore{\tau_{k+1}} \big\|^2\middle|\mathcal{H}_T\right], \label{eq:after_HT_withLast}
\end{multline}
where $\mathcal{K}_k$ is defined in \eqref{eq:LHS_define}, respectively. Then for the left hand side of \eqref{eq:after_HT_withLast}, we first combine the two sums on $k$ and $n$ to one sum on $n$. 
\begin{align*}
    &\mathbb{E}\left[ \sum_{k=0}^{\NumJumps{t}-1}\mathcal{K}_k\middle| \mathcal{H}_T\right]+ \mathcal{R}(t)\\
    =\,\,&\mathbb{E}\Bigg[ \sum_{k=0}^{\NumJumps{t}-1}\Bigg(\sum_{n=N^{(\tau_k)}}^{N^{(\tau_{k+1})}-1}\bigg(\mathbb{E}\Big[\big\| \Ejump{t_{n+1}}\big\|^2  \Big|\mathcal{F}_{t_n}\Big]-\big\|\Ejump{t_{n}}\big\|^2  \bigg)\mathcal{I}_{N^{(\tau_{k+1})}>n} \\
    &\qquad+ \displaystyle \sum_{n=N^{(\tau_{\NumJumps{t}})}}^{N^{(t)}-2}\Big(\mathbb{E}\Big[\big\|\Ejump{t_{n+1}}\big\|^2 \Big|\mathcal{F}_{t_n}\Big]-\big\|\Ejump{t_{n}}\big\|^2\Big)\mathbf{1}_{\{N^{(t)}> n+1\}} \\
    &\,\,\quad\quad\qquad\quad\qquad\qquad+\mathbb{E}\Big[\big\|\Ejump{t}\big\|^2 \Big|\mathcal{F}_{t_{N^{(t)}-1}}\Big]-\big\|\Ejump{t_{N^{(t)}-1}}\big\|^2 \Bigg|\mathcal{H}_T \Bigg]\\
    =\,\,&\mathbb{E}\Bigg[ \sum_{n=0}^{N^{(t)}-2}\bigg(\mathbb{E}\Big[\big\| \Ejump{t_{n+1}} )\big\|^2  \Big|\mathcal{F}_{t_n}\Big]-\big\|\Ejump{t_{n}}\big\|^2  \bigg)\mathcal{I}_{N^{(t)}>n+1}\\
    &\,\,\quad\quad\qquad\quad\qquad\qquad+\mathbb{E}\Big[\big\|\Ejump{t }\big\|^2 \Big|\mathcal{F}_{t_{N^{(t)}-1}}\Big]-\big\|\Ejump{t_{N^{(t)}-1}}\big\|^2 \Bigg|\mathcal{H}_T \Bigg].
\end{align*}
Since $ N_{\max}^{(t)}$  is $\mathcal{H}_T$-measurable, we bound $N^{(t)}$ by $ N_{\max}^{(t)}$ and move the sum out of the conditional expectation. Then, by Section \ref{sec:adpt_strategy_SDE_jump} that $\mathcal{H}_T$ is a sub-$\sigma$-algebra of $\mathcal{F}_{t_n}$ for $n\in\big[0,N^{(t)}_{\max}-1\big ]$, we apply tower property. Finally, from the telescoping sum  with $\Ejump{0}=0$ we have 
\begin{align*}
    &\mathbb{E}\left[ \sum_{k=0}^{\NumJumps{t}-1}\mathcal{K}_k\middle| \mathcal{H}_T\right]+ \mathcal{R}(t)\\
    =& \sum_{n=0}^{N^{(t)}_{\max}-2}\mathbb{E}\Bigg[\mathbb{E}\Big[\big\| \Ejump{t_{n+1}} \big\|^2 \mathcal{I}_{N^{(t)}>n+1} \Big|\mathcal{F}_{t_n}\Big]-\big\|\Ejump{t_{n}} \big\|^2\mathcal{I}_{N^{(t)}>n+1}  \\
    &\,\,\quad\quad\qquad\quad\qquad\qquad+\mathbb{E}\Big[\big\|\Ejump{t } \big\|^2 \Big|\mathcal{F}_{t_{N^{(t)}-1}}\Big]-\big\|\Ejump{t_{N^{(t)}-1}}\big\|^2 \Bigg|\mathcal{H}_T \Bigg]\\
    =\,\,&\mathbb{E}\Big[\big\|\Ejump{t_{N^{(t)}-1}}\big\|^2 \Big|\mathcal{H}_T \Big]-\mathbb{E}\Big[\big\|\Ejump{t_0}\big\|^2 \Big|\mathcal{H}_T\Big]\\
    &\,\,\quad\quad\qquad\quad\qquad\qquad+\mathbb{E}\Big[\big\|\Ejump{t }\big\|^2 \Big|\mathcal{H}_T\Big]-\mathbb{E}\Big[\big\|\Ejump{t_{N^{(t)}-1}}\big\|^2 \Big|\mathcal{H}_T \Big]\\
    =\,\,&\mathbb{E}\Big[\big\|\Ejump{t }\big\|^2\Big| \mathcal{H}_T \Big]. \numberthis \label{eq:LHS_total}
\end{align*}
where $N^{(t)}_{\max}:= \lceil t/\hmin + \NumJumps{t} \rceil$. Substituting \eqref{eq:LHS_total} back to \eqref{eq:after_HT_withLast}, we have 
\begin{multline}
    \mathbb{E}\Big[\big\|\Ejump{t}\big\|^2\Big| \mathcal{H}_T \Big]\leq \Gamma_1 \int_{0}^{t} \mathbb{E}\bigg[\big\|   \Ejump{r}\big\|^2 \bigg| \mathcal{H}_T\bigg]dr+\GamR\left(\rho t+1 \right){\hmax^{2\order}}\\
    +(1+2\Czeta{1})\sum_{k=0}^{\NumJumps{t}-1}\mathbb{E}\left[\big\|\EjumpBefore{\tau_{k+1}} \big\|^2\middle|\mathcal{H}_T\right]. \label{eq:final_HT}
\end{multline}
By \eqref{eq:jump_sum=int}, we write the last jump term in the integral form, take a final expectation, and use the fact that $\nu(Z)=\lambda$ to get
\begin{align*}
    \mathbb{E}\left[\sum_{k=0}^{\NumJumps{t}-1}\mathbb{E}\left[\big\|\EjumpBefore{\tau_{k+1}}  \big\|^2\middle|\mathcal{H}_T\right]\right]=\,\,&\mathbb{E}\left[\int_{0}^{t}\int_Z\mathbb{E}\left[\big\|\EjumpBefore{r}  \big\|^2 \middle|\mathcal{H}_T\right]J_{\nu}(dz\times dr)\right]\\
    \leq\,\,&\int_{0}^{t}\int_Z\mathbb{E}\left[\big\|\EjumpBefore{r} \big\|^2 \right] \nu(dz)dr\\
    \leq\,\,&\lambda \int_{0}^{t} \mathbb{E}\left[\big\|\Ejump{r} \big\|^2 \right]  dr. \numberthis \label{eq:final_jump}
\end{align*}
Substituting \eqref{eq:final_jump} back into \eqref{eq:final_HT} and take a final expectation on the both sides we have 
\begin{multline*}
    \mathbb{E}\bigg[\mathbb{E}\Big[\big\|\Ejump{t}\big\|^2\Big| \mathcal{H}_T \Big]\bigg]\leq \Gamma_1  \mathbb{E}\left[\int_{0}^{t} \mathbb{E}\Big[\big\|    \Ejump{r}\big\|^2 \Big| \mathcal{H}_T\Big]dr\right]+\GamR\left(\rho t+1 \right){\hmax^{2\order}}\\
    +  \lambda (1+2\Czeta{1})\int_{0}^{t} \mathbb{E}\left[\big\|\Ejump{r} \big\|^2 \right]  dr.
\end{multline*}
Simplifying with tower property we have 
\begin{align*}
    \mathbb{E}\Big[\big\|\Ejump{t}\big\|^2 \Big]
    \leq \Big(\Gamma_1 + \lambda (1+2\Czeta{1})\Big)  \int_{0}^{t} \mathbb{E}\Big[\big\|   \Ejump{r}\big\|^2  \Big]dr +\GamR\left(\rho t+1 \right){\hmax^{2\order}}.
\end{align*}
For all $t\in[0,T]$, by Gr\"onwall's inequality we have 
\begin{align*}
    \Big(\mathbb{E}\Big[\big\|\Ejump{t}\big\|^2 \Big]\Big)^{1/2}\leq \finalC{t}\,{\hmax^\order}. 
\end{align*}
Taking the maximum over $t$ on the both sides, the proof follows with
\begin{align}   \finalC{T}:=\sqrt{\GamR\left(\rho T+1 \right)\exp\Big(T\big(\Gamma_1 + \lambda (1+2\Czeta{1})\big)\Big)}. \label{eq:CJRrhot}
\end{align}
\end{proof}

\subsection{An example of a main map: the Jump Adapted Adaptive Milstein Method (\JAAMM)}\label{sec:Mil}
Now recall as an example the Milstein method expressed as a map, \cite{mao2007SDEapp,Fandithesis} for the SDE \eqref{eq:true}. Over each step $[t_n,t_{n+1}]$ the Milstein map $\mainMap:\mathbb{R}^d\times \mathbb{R}  \times\mathbb{R}\rightarrow \mathbb{R}^d$ is defined as
\begin{align}
\mainMap\big(x,t_n,s-t_n\big):= x+(s-t_n)f(x)+\sum_{i=1}^{m}g_i(x)I_{i}^{t_n,s}+\sum_{i,j=1}^{m}\mathbf{D}g_i(x)g_j(x) I_{j,i}^{t_n,s}. \numberthis \label{eq:deftheta}
\end{align}
where the iterated stochastic integrals are defined as 
\begin{align}
    I_{i}^{t_n,t_{n+1}}:=\int_{t_{n}}^{t_{n+1}}dW_i(s), \qquad I_{j,i}^{t_n,t_{n+1}}:=\int_{t_{n}}^{t_{n+1}} \int_{t_{n}}^{s}dW_j(p) dW_i(s). \label{eq:defISI}
\end{align}

\begin{assumption} \label{ass:SDEmoments_power}
Suppose that, for some constants $c_{  3,4,5,6  }$, $q_1$, $q_2\geq 0$; $i=1,\dots,m$, we have
\begin{alignat}{2}
   \big \|\mathbf{D}f(x)\big\|_{\mathbf{F}}\leq\,\,& c_3(1+\|x\|^{q_1+1}), \qquad\quad \big\|\mathbf{D}g_i(x)\big\|_{\mathbf{F}}&&\leq\,\, c_4(1+\|x\|^{q_2+1}), \label{eq:Df+Dg}\\
    \big\| f(x)\big\| \leq\,\,& c_5(1+\|x\|^{q_1+2}), \quad\quad \big\| g(x)\big\|_{\mathbf{F}(d\times m)} &&\leq\,\, c_6(1+\|x\|^{q_2+2}).\label{eq:||f||+||g||}
\end{alignat}
Furthermore, for some $c_{1,2}\geq 0$; $i=1,\dots,m$, we have
\begin{align}
    \big\|\mathbf{D}^2f(x)\big\|_{\mathbf{T}_3}\leq\,\,
    c_1(1+\|x\|^{q_1}), \quad \big\|\mathbf{D}^2g_i(x)\big\|_{\mathbf{T}_3}\leq\,\, c_2(1+\|x\|^{q_2}).\label{eq:D^2f+D^2g}
\end{align}
\end{assumption}

\begin{assumption}\label{ass:etaq}
Suppose that \eqref{eq:monotone} in Assumption \ref{ass:f+g} holds with 
\begin{align*}
    \eta \geq 4q  + 2q_2+10,
\end{align*}
where $q:=q_1\vee q_2$, $q_1$ and $q_2$ are from \eqref{eq:||f||+||g||} in Assumption \ref{ass:SDEmoments_power}.
\end{assumption}

The following moment bounds then apply over any finite interval $[0,T]$:
\begin{lemma}\label{lem:boundedMomentsSDE_jump}
Let $f$, $g$ and $\gamma$ satisfy  \eqref{eq:local_lipschitz},\eqref{eq:monotone}, \eqref{eq:local_lipschitz_jump}, \eqref{eq:monotone_gamma} and \eqref{eq:||f||+||g||}. Then the SJDE \eqref{eq:true_jump} has a unique global solution such that there exists a constant 
\begin{equation}\label{eq:SDEmoments_jump}
\mathbb{E}\biggl[\sup_{s\in[0,T]}\|\Xjump{s}\|^{\eta-2q_2-2}\biggr] \leq \CxJ,
\end{equation}
with $\CxJ:=\CxJ(t,X_0,\eta,q_2,\lambda)$.
\end{lemma}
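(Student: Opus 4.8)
The plan is to apply the It\^o formula for the jump diffusion \eqref{eq:true_jump} to a smoothed power of the Euclidean norm, localise so that the resulting stochastic integrals are genuine martingales, and then close the estimate with Gr\"onwall's inequality, Fatou's lemma and the Burkholder--Davis--Gundy (BDG) inequality. Write $p:=\eta-2q_2-2$ for the target exponent and set $V(x):=(1+\|x\|^2)^{p/2}$, a $C^2$ function with $\|\nabla V(x)\|\leq p(1+\|x\|^2)^{(p-1)/2}$ and a comparable bound on $\mathbf{D}^2V$. Global existence and uniqueness of $(\Xjump{t})_{t\in[0,T]}$ is already supplied by Assumptions \ref{ass:f+g}--\ref{ass:jump_gamma} through \cite{gyongy1980stochastic}, so the task is the moment bound. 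First I would introduce the stopping times $\theta_R:=\inf\{t\geq0:\|\Xjump{t}\|\geq R\}$, which increase to $+\infty$ almost surely by non-explosion, and apply It\^o's formula to $V(\Xjump{t\wedge\theta_R})$, splitting the increment into the time integral of the generator $\mathcal{A}V$, a Brownian integral, and a compensated Poisson integral.

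The heart of the argument is the pointwise bound $\mathcal{A}V(x)\leq C(1+V(x))$, which I would split into a diffusion part and a jump part. For the diffusion part, differentiating $V$ and using $\|g(x)^\top x\|^2\leq\|g(x)\|_{\mathbf{F}(d\times m)}^2(1+\|x\|^2)$ reduces the estimate to controlling $\langle x,f(x)\rangle+\tfrac{p-1}{2}\|g(x)\|_{\mathbf{F}(d\times m)}^2$. The strict margin $p<\eta$ is what makes this work: choosing $\delta>0$ with $(p-1)(1+\delta)\leq\eta-1$ and splitting $\|g(x)\|^2\leq(1+\delta)\|g(x)-g(0)\|^2+(1+\delta^{-1})\|g(0)\|^2$ pushes all the $x$-dependence into $\|g(x)-g(0)\|^2$, which the monotone condition \eqref{eq:monotone} (applied with $y=0$) absorbs, leaving only $L\|x\|^2+\|x\|\,\|f(0)\|$ and additive constants; Assumption \ref{ass:etaq} guarantees $p>0$ and that this margin is genuinely available. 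The growth bounds \eqref{eq:||f||+||g||} are then used to ensure $\mathcal{A}V$ is well defined and the localised integrals are integrable. For the jump part, $\int_Z[V(x+\gamma(z,x))-V(x)]\nu(dz)$ is estimated by the fundamental theorem of calculus together with the global Lipschitz (hence linear growth) property of $\gamma$ in its second argument and the finiteness $\nu(Z)=\lambda$, giving a bound $C_\lambda(1+V(x))$ that explains the dependence of $\CxJ$ on $\lambda$.

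With $\mathcal{A}V\leq C(1+V)$ in hand, I would first take expectations: the two stopped martingale terms have zero mean, so $\mathbb{E}[V(\Xjump{t\wedge\theta_R})]\leq V(\XjumpStart)+C\int_0^t(1+\mathbb{E}[V(\Xjump{s\wedge\theta_R})])\,ds$, and Gr\"onwall plus Fatou (as $R\to\infty$) yield $\sup_{t\in[0,T]}\mathbb{E}[\|\Xjump{t}\|^{r}]<\infty$ for every $r<\eta$. To move the supremum inside the expectation I would return to the It\^o decomposition, take $\sup_{t\leq T}$ before expectations, and apply BDG to the Brownian and purely discontinuous martingales. This is exactly where the reduction by $2q_2+2$ orders is spent: since $\|g\|$ grows like $\|x\|^{q_2+2}$, the diffusion martingale carries a quadratic-variation integrand growing like $\|x\|^{2(p+q_2+1)}$, and a Young splitting that returns $\tfrac12\mathbb{E}[\sup_{s\leq t}V]$ to the left-hand side leaves an expectation moment of order $p+2q_2+2=\eta$ on the right, precisely the ceiling controlled by the monotone condition. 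A final Gr\"onwall step then gives \eqref{eq:SDEmoments_jump}.

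I expect two places to demand the most care. The first is the very last step: the moment-order bookkeeping in the BDG/Young balance is what forces the exponent to be exactly $\eta-2q_2-2$ rather than anything larger, and getting the absorption to close at the boundary order $\eta$ must be handled carefully (for instance by first establishing the expectation moments at all orders strictly below $\eta$ with constants that are monitored as the order approaches $\eta$). The second is the jump term: the hypotheses \eqref{eq:local_lipschitz_jump}--\eqref{eq:monotone_gamma} as stated control only the second $\nu$-moment of $\gamma$, whereas bounding $\int_Z[V(x+\gamma)-V(x)]\nu(dz)$ for $p>2$ needs the stronger pointwise global Lipschitz property of $\gamma$ together with the finite-activity structure ($\nu(Z)=\lambda<\infty$, so $\gamma(\cdot,0)\in L^p(\nu)$) to produce the linear-growth estimate $\|\gamma(z,x)\|\leq K\|x\|+\|\gamma(z,0)\|$ that closes the bound. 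A routine but necessary check is that, after localisation, both stochastic integrals are true martingales so their expectations vanish and all BDG quadratic-variation terms are finite before the absorption step.
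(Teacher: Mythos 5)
Your overall architecture -- It\^o's formula applied to $V(x)=(1+\|x\|^2)^{p/2}$ with $p=\eta-2q_2-2$, localisation by first-exit times, Gr\"onwall and Fatou for the expectation moments, then Burkholder--Davis--Gundy plus a Young split for the supremum, with the jump generator controlled through $\nu(Z)=\lambda<\infty$ and the Lipschitz/linear-growth property of $\gamma$ -- is the standard route that the paper's one-line proof points to (the paper defers entirely to the thesis and to the cited lemmas of Mao, Dareiotis et al.\ and Deng et al.). However, there is a genuine gap at precisely the point you flag, and your proposed repair does not close it. Your BDG/Young balance requires $\sup_{t\le T}\mathbb{E}\big[\|\Xjump{t}\|^{p+2q_2+2}\big]=\sup_{t\le T}\mathbb{E}\big[\|\Xjump{t}\|^{\eta}\big]<\infty$, but your first step only produces expectation moments of order $r<\eta$ strictly. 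Under the paper's hypotheses this is not a removable bookkeeping issue: \eqref{eq:monotone} is a condition on \emph{differences}, so applying it with $y=0$ and splitting $\|g(x)\|^2_{\mathbf{F}(d\times m)}\le(1+\delta)\|g(x)-g(0)\|^2_{\mathbf{F}(d\times m)}+(1+\delta^{-1})\|g(0)\|^2_{\mathbf{F}(d\times m)}$ forces $(r-1)(1+\delta)\le\eta-1$, i.e.\ $\delta\le(\eta-r)/(r-1)$, so the additive constant $(1+\delta^{-1})\|g(0)\|^2_{\mathbf{F}(d\times m)}$ in your Gr\"onwall estimate blows up like $(\eta-r)^{-1}$ as $r\uparrow\eta$. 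Hence ``monitoring the constants as the order approaches $\eta$'' yields a divergent bound, and at $r=\eta$ no margin $\delta>0$ remains: the uncancelled cross term $(\eta-1)\langle g(x)-g(0),g(0)\rangle_{\mathbf{F}}$ grows like $\|x\|^{q_2+2}$, which cannot be dominated by $C(1+\|x\|^2)$ when $g$ is superlinear and $g(0)\neq0$.

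The way to close the argument at the exponent actually claimed in \eqref{eq:SDEmoments_jump} is not to discard the good term that \eqref{eq:monotone} supplies. Estimating the generator at order $p<\eta$, keep the surplus:
\begin{equation*}
\langle x,f(x)\rangle+\tfrac{p-1}{2}\|g(x)\|^2_{\mathbf{F}(d\times m)}
\le L\|x\|^2+\|x\|\,\|f(0)\|+C_\delta-\tfrac{c}{2}\,\|g(x)-g(0)\|^2_{\mathbf{F}(d\times m)},
\end{equation*}
with $c>0$ proportional to $\eta-p$ (available exactly because $p\le\eta-2<\eta$). Retaining this negative term in the localised expectation inequality gives, in addition to $\sup_{t\le T}\mathbb{E}[V(\Xjump{t})]\le C$, the integrated bound $\mathbb{E}\int_0^T(1+\|\Xjump{s}\|^2)^{p/2-1}\|g(\Xjump{s})-g(0)\|^2_{\mathbf{F}(d\times m)}\,ds\le C$. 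After your BDG/Young step the quantity you must control is exactly $\mathbb{E}\int_0^T(1+\|\Xjump{s}\|^2)^{p/2-1}\|g(\Xjump{s})\|^2_{\mathbf{F}(d\times m)}\,ds$, which is bounded by the displayed quantity plus a constant multiple of an order-$(p-2)$ moment; no moment of order $\eta$ is ever required, and the absorption closes at $p=\eta-2q_2-2$ itself rather than only at orders strictly below it. (Your treatment of the jump contribution needs no analogous repair: by \eqref{eq:monotone_gamma} and finite activity, $\gamma$ has linear growth in its second argument, so both the compensator term and the quadratic variation of the compensated jump martingale involve only order-$p$ moments.) With this replacement your proof is sound; without it, the final step fails at exactly the order the lemma asserts.
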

\begin{proof}
The proof, inspired by \cite[Lem. 4.2]{mao2015truncated} and \cite[Lem. 2.2]{dareiotis2016tamed}, makes use of \cite[Lem. 3.5]{deng2019truncated} and may be found in full in \cite{Fandithesis}. 
\end{proof}
    
\begin{remark}\label{rem:SDE_power_jump}
The moment in \eqref{eq:SDEmoments_jump} only depends on $\eta$ and $q_2$ because the diffusion coefficient $g$ in \eqref{eq:true_jump} is superlinearly bounded whereas the the jump coefficient $\gamma$ in \eqref{eq:monotone_gamma} is not, see Assumptions \ref{ass:f+g} and \ref{ass:jump_gamma}. 
\end{remark}

To ensure the appropriate bound to satisfy Assumption \ref{ass:one-stepBounds}, we use a particular kind of adaptive time-stepping strategy, such that whenever $\hmin < h_{n+1} \leq \hmax$,
 \begin{align}
  \big\|\Yjump{t_n}\big\| < R, \quad n=0,\dots, N-1. \label{eq:defYn_jump}
 \end{align}
This is similar to the path-bounded strategy for adaptive Milstein method in \cite{KLSBit,Fandithesis}.

\begin{lemma}\label{lem:stratPB_jump}
Let $\big\{\big(\Yjump{s}\big)_{s\in[t_n,t_{n+1}]},h_{n+1}\big\}_{n\in\mathbb{N}}$ be the \JAAMM scheme 
with map $\mathcal{M}$ from Definition \ref{def:j-adapted adaptive explicit Milstein scheme} given by \eqref{eq:deftheta}. 
Then $\{h_{n+1}\}_{n\in \mathbb{N}}$ satisfies \eqref{eq:defYn_jump} if for each $n = 0, \dots , N-1$ and $\kappa>0$,  
\begin{align}
    h_{n+1}=\left(\hmin \vee \left ( \frac{\hmax}{\big\|\Yjump{t_n}\big\|^{1/\kappa}} \wedge \hmax \right ) \right)\wedge \Big(\tau_{\NumJumps{t_n}+1}-t_n\Big) . \label{eq:defh_jump}
\end{align}
\end{lemma}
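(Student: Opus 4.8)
The plan is to prove the implication pointwise in $n$: fixing any $n\in\{0,\dots,N-1\}$ and assuming the stepsize returned by \eqref{eq:defh_jump} satisfies $\hmin<\hjump{n+1}\le\hmax$, I would show directly that $\big\|\Yjump{t_n}\big\|<R$, where the relevant threshold turns out to be $R=\rho^{\kappa}$ with $\rho=\hmax/\hmin$ as in \eqref{eq:hmaxhmin}. No probabilistic or dynamical property of the scheme is needed; the argument is a purely algebraic inversion of the $\vee$/$\wedge$ structure of the stepsize map, so the whole lemma reduces to reading off what the constraint $\hmin<\hjump{n+1}$ says about $\big\|\Yjump{t_n}\big\|$.

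First I would split the right-hand side of \eqref{eq:defh_jump} as $\hjump{n+1}=A_n\wedge B_n$, where $A_n:=\hmin\vee\big(\hmax\big\|\Yjump{t_n}\big\|^{-1/\kappa}\wedge\hmax\big)$ is the adaptive term and $B_n:=\tau_{\NumJumps{t_n}+1}-t_n$ is the distance to the next (precomputed) jump time. The upper bound $\hjump{n+1}\le\hmax$ is automatic: the inner minimum is at most $\hmax$ and $\hmin<\hmax$ give $A_n\le\hmax$, hence $A_n\wedge B_n\le\hmax$. So the only information to exploit is the strict lower bound $\hjump{n+1}>\hmin$.

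The key step is the observation that the jump cap can only shrink the step: from $\hmin<\hjump{n+1}=A_n\wedge B_n$ I immediately get $A_n>\hmin$. Since $A_n$ is a maximum of $\hmin$ with a second quantity, $A_n>\hmin$ forces $\hmax\big\|\Yjump{t_n}\big\|^{-1/\kappa}\wedge\hmax>\hmin$, and because $\hmax>\hmin$ this in turn forces $\hmax\big\|\Yjump{t_n}\big\|^{-1/\kappa}>\hmin$. Inverting, $\big\|\Yjump{t_n}\big\|^{1/\kappa}<\hmax/\hmin=\rho$, i.e. $\big\|\Yjump{t_n}\big\|<\rho^{\kappa}=R$. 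As $n$ was arbitrary this establishes \eqref{eq:defYn_jump}, and I would close by noting that $\hjump{n+1}$ is a deterministic function of $\Yjump{t_n}$ and of the $\mathcal{H}_T$-measurable next jump time, hence $\mathcal{F}_{t_n}$-measurable, so the strategy is admissible under Assumption \ref{ass:h} via Remark \ref{rem:hstopp}.

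I do not expect a serious obstacle here: relative to the non-jump path-bounded strategy of \cite{KLSBit,Fandithesis}, the only genuinely new feature is the minimum with the jump distance $B_n$, and the single point that must be got right is that this minimum never raises the step, so that the jump-free inversion giving $\big\|\Yjump{t_n}\big\|<\rho^{\kappa}$ survives unchanged. The only mild care needed is bookkeeping around strict versus non-strict inequalities, since it is precisely the strict hypothesis $\hmin<\hjump{n+1}$ that yields the strict bound $\big\|\Yjump{t_n}\big\|<R$.
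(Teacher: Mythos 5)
Your proposal is correct and follows essentially the same route as the paper's proof: both invert the constraint $\hmin < h_{n+1}$ through the $\vee/\wedge$ structure of \eqref{eq:defh_jump}, using the key observation that the cap by the jump distance can only shrink the step, to conclude $\big\|\Yjump{t_n}\big\| < (\hmax/\hmin)^{\kappa} = \rho^{\kappa} = R$. The only difference is organizational: the paper partitions the event $\{\hmin < h_{n+1} \leq \hmax\}$ into four sub-events according to which term attains the value of $h_{n+1}$ and argues case by case, whereas you avoid the case split entirely via monotonicity of $\vee$ and $\wedge$ --- a slightly cleaner write-up of the same argument.
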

\begin{proof}

When we compute $h_{n+1}$ according to \eqref{eq:defh_jump} there are four possible cases, leading to the following partition of the event $\{\hmin < h_{n+1} \leq \hmax\}$.
\begin{multline*}
\left\{\hmin < h_{n+1} \leq \hmax\right\}\\
=\underbrace{\left\{h_{n+1}=\frac{\hmax}{\big\|\Yjump{t_n}\big\|^{1/\kappa}}\right\}}_{=:A}
\cup\underbrace{\left\{h_{n+1}=\tau_{\NumJumps{t_n}+1}-t_n\right\}}_{=:B}\cup\underbrace{\left\{h_{n+1}=h_{\max}\right\}}_{=:C}\cup\underbrace{\left\{h_{n+1}=h_{\min}\right\}}_{=:D}
\end{multline*}
Event $D$ corresponds to use of the backstop method so it may be disregarded. On event $B$ it follows from \eqref{eq:defh_jump} that
\[
h_{n+1}=\tau_{\NumJumps{t_n}+1}-t_n<\frac{\hmax}{\big\|\Yjump{t_n}\big\|^{1/\kappa}}.
\] 
On event $C$, it follows from \eqref{eq:defh_jump} that
\[
h_{n+1}=h_{\max}<\frac{\hmax}{\big\|\Yjump{t_n}\big\|^{1/\kappa}}.
\]
We can therefore say that on the event $A\cup B\cup C$ the relation
\[
h_{\min}<h_{n+1}<\frac{\hmax}{\big\|\Yjump{t_n}\big\|^{1/\kappa}}
\]
holds. Rearranging, and with $\rho\hmin=\hmax$ we have 
\begin{equation*}
    \big\|\Yjump{t_n}\big\| <  \left(\frac{\hmax}{\hmin}\right)^\kappa <  \left(\frac{\hmax}{\hmin}\right)^\kappa = \rho^\kappa,
\end{equation*}
so \eqref{eq:defYn_jump} is satisfied with  $R=\rho^\kappa$.
\end{proof}

We can then prove the following lemma, which confirms that the Milstein method satisfies \eqref{eq:Case_I_twostep} in Assumption \ref{ass:one-stepBounds} with $\order=1$.
\begin{lemma}  \label{lmm:CaseI_jump}
Let $f$, $g$ satisfy Assumption \ref{ass:f+g}, \ref{ass:SDEmoments_power}, and \ref{ass:etaq}. Let $\Xjump{s}$ be a solution of \eqref{eq:true_jump} with $\gamma\equiv 0$. Suppose $\EThetaJump{s}$ defined in \eqref{eq:defE(r)_jump} for $s\in[t_n,t_{n+1}]$, $n\in\mathbb{N}$. 
There exists a constant $C_E$ and integrable $\mathcal{F}_{t_n}$-measurable random variables $C_{M,n}$ such that for $A_n,B_n$ a.s. finite and $\mathbb{R}^d$-valued random variables with $\|B_n\|\leq R$ a.s. and $R$ is as given in \eqref{eq:defYn_jump}:
\begin{multline}
    \mathbb{E}\Big[\Big\|  \EThetaJump{t_{n+1}})\Big\|^2 \Big|\mathcal{F}_{t_n}\Big] \leq \|A_n-B_n\|^2\\ 
    +C_E\int_{t_n}^{t_{n+1}} \mathbb{E}\left[\big\|\EThetaJump{r}\big\|^2 \middle|\mathcal{F}_{t_n}\right]dr
    +C_{M,n}\,h_{n+1}^3, \quad a.s.  \label{eq:Case_I_twostep2}
\end{multline} 
\end{lemma}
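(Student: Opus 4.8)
The plan is to recognise \eqref{eq:Case_I_twostep2} as the standard mean-square one-step error estimate for the Milstein map on the non-jump SDE, of the type proved in \cite{KLSBit}, and to check that the a priori bound furnished by Lemma \ref{lem:stratPB_jump} together with the moment bound \eqref{eq:SDEmoments_jump} render the constants of the required form. First I would dispose of the indicator in \eqref{eq:defE(r)_jump}: off the event $\{\hmin<\hjump{n+1}\le\hmax\}$ the error $\EThetaJump{s}$ is identically zero and \eqref{eq:Case_I_twostep2} is trivial, so I work throughout on that event, where $\|B_n\|\le R$ holds by Lemma \ref{lem:stratPB_jump}. Writing $\mathcal{M}_s:=\mainMap(B_n,t_n,s-t_n)$ and letting $\Xjump{s}$ denote the exact solution of \eqref{eq:true_jump} with $\gamma\equiv 0$ started from $A_n$ at $t_n$, on this event $\EThetaJump{s}=\Xjump{s}-\mathcal{M}_s=:E(s)$, with $E(t_n)=A_n-B_n$. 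Using \eqref{eq:deftheta} and \eqref{eq:defISI} to rewrite $g_i(B_n)I_i^{t_n,s}$ and $\sum_j\mathbf{D}g_i(B_n)g_j(B_n)I_{j,i}^{t_n,s}$ as It\^o integrals over $[t_n,s]$, the process $E$ has drift $f(\Xjump{s})-f(B_n)$ and $i$-th diffusion coefficient $g_i(\Xjump{s})-\tilde g_i(s)$, where $\tilde g_i(s):=g_i(B_n)+\sum_j\mathbf{D}g_i(B_n)g_j(B_n)\big(W_j(s)-W_j(t_n)\big)$ is the Milstein correction.

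Next I would apply It\^o's formula to $\|E(s)\|^2$ and take $\expect{\cdot\,|\,\mathcal{F}_{t_n}}$, so that the martingale part has zero conditional mean and
\[
\expect{\|E(t_{n+1})\|^2\,\middle|\,\mathcal{F}_{t_n}}=\|A_n-B_n\|^2+\expect{\int_{t_n}^{t_{n+1}}\!\!\Big(2\langle E,f(\Xjump{r})-f(B_n)\rangle+\textstyle\sum_i\|g_i(\Xjump{r})-\tilde g_i(r)\|^2\Big)dr\,\middle|\,\mathcal{F}_{t_n}}.
\]
The decisive manoeuvre is to split each coefficient difference at the point $\mathcal{M}_r$: writing $f(\Xjump{r})-f(B_n)=[f(\Xjump{r})-f(\mathcal{M}_r)]+[f(\mathcal{M}_r)-f(B_n)]$, and likewise $g_i(\Xjump{r})-\tilde g_i(r)=[g_i(\Xjump{r})-g_i(\mathcal{M}_r)]+[g_i(\mathcal{M}_r)-\tilde g_i(r)]$, the aligned pieces recombine into $2\langle E,f(\Xjump{r})-f(\mathcal{M}_r)\rangle+\|g(\Xjump{r})-g(\mathcal{M}_r)\|^2_{\mathbf{F}(d\times m)}$, which the global monotone condition \eqref{eq:monotone} (valid for $\eta\ge 2$) bounds above by $2L\|E(r)\|^2$. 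This supplies the integral term of \eqref{eq:Case_I_twostep2}. Everything that remains is a consistency residual built solely from the Milstein discrepancy $\mathcal{M}_r$ versus $B_n$, namely $2\langle E,f(\mathcal{M}_r)-f(B_n)\rangle$, the cross terms $2\langle g_i(\Xjump{r})-g_i(\mathcal{M}_r),g_i(\mathcal{M}_r)-\tilde g_i(r)\rangle$, and $\sum_i\|g_i(\mathcal{M}_r)-\tilde g_i(r)\|^2$, and must integrate to $O(h_{n+1}^3)$.

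Controlling this residual is the main obstacle, because the naive route of bounding the cross terms by Young's inequality loses a full power of $h$: since $\|\mathcal{M}_r-B_n\|$ is only $O(h^{1/2})$ in $L^2$, it would yield $O(h^2)$ rather than the required $O(h^3)$. The remedy is a second-order Taylor expansion of $f$ and of each $g_i$ about $B_n$. Because $\mathcal{M}_r$ is exactly the Milstein increment, the first-order term $\mathbf{D}g_i(B_n)(\mathcal{M}_r-B_n)$ reproduces $\tilde g_i(r)-g_i(B_n)$ up to terms of order $h$ (a drift contribution and double It\^o integrals), so that $g_i(\mathcal{M}_r)-\tilde g_i(r)$ is $O(h)$ in $L^2$; moreover the $O(h^{1/2})$ stochastic parts carry zero conditional mean, and it is precisely this cancellation under $\expect{\cdot\,|\,\mathcal{F}_{t_n}}$ that leaves the residual at order $O(h^2)$ per unit time. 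Making this quantitative needs the $C^2$ regularity of $f,g_i$ with the polynomial derivative bounds of Assumption \ref{ass:SDEmoments_power} (see \eqref{eq:Df+Dg} and \eqref{eq:D^2f+D^2g}), the elementary moment identities $\expect{|I_i^{t_n,s}|^2}=s-t_n$ and $\expect{|I_{j,i}^{t_n,s}|^2}=O\big((s-t_n)^2\big)$ for the integrals in \eqref{eq:defISI}, and the a priori bound $\|B_n\|\le R$, which collapses every polynomial in $\|B_n\|$ to a finite $R$-dependent constant; the remaining $\Xjump{r}$-dependence is either folded into the integral term above or bounded in conditional expectation via \eqref{eq:SDEmoments_jump}. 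Collecting the surviving terms yields an integrand of the form $C_E\|E(r)\|^2+C_{M,n}h_{n+1}^2$ with $C_E$ deterministic and $C_{M,n}$ an integrable, $\mathcal{F}_{t_n}$-measurable random variable depending only on $R$ and local moments; integrating over $[t_n,t_{n+1}]$ produces the claimed $C_{M,n}h_{n+1}^3$ and closes the estimate.
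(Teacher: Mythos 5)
Your proposal is correct and takes essentially the same route as the paper: the paper's proof of this lemma simply defers to Lemma 7.2 of \cite{KLSBit}, with the exact and numerical initial values there replaced by the arbitrary $\mathcal{F}_{t_n}$-measurable data $A_n$, $B_n$ (legitimate because no jump occurs on the step, so the dynamics are those of the non-jump SDE), and your argument is a reconstruction of precisely that one-step estimate. The ingredients you identify --- the monotone condition \eqref{eq:monotone} to produce the Gr\"onwall-type integral term, second-order Taylor expansion with cancellation of the half-order stochastic terms under $\mathbb{E}[\,\cdot\,|\mathcal{F}_{t_n}]$ to reach the $O(h_{n+1}^3)$ residual, and the bound $\|B_n\|\leq R$ together with the moment Assumptions \ref{ass:SDEmoments_power} and \ref{ass:etaq} to make $C_{M,n}$ integrable --- are exactly those on which the cited proof rests.
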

\begin{proof}
The proof is the same of Lemma 7.2 in ~\cite{KLSBit}, where $X(t_n)$ and $\widetilde Y(t_n)$ have been replaced with $x$ and $y$ using the fact that we are applying the map $\mainMap$ over an interval where no jump takes place. In order to ensure finiteness of the expectation of $C_{M,n}$ certain moments of the SDE \eqref{eq:true} must be finite, and this is ensured by Assumptions \ref{ass:SDEmoments_power} and \ref{ass:etaq}:  also see~\cite{KLSBit}.
\end{proof}
Finally, we comment that although the bound \eqref{eq:Case_I_twostep2} only holds for $\|B_n\|\leq R$, this is sufficient to ensure the conditions of Assumption \ref{ass:one-stepBounds}, since if $\|B_n\|>R$ then $h_{n+1}\leq h_{\min}$ and $\EThetaJump{t_{n+1}}=0$. For the backstop map $\mathcal{B}$, we require a scheme that satisfies a mean-square consistency condition of the same order leading to \eqref{eq:Case_I_twostep2}. In Section \ref{sec:numerics} we use order-one strongly $L_2$-convergent method as backstop, namely the projected Milstein method (see \cite{beyn2017stochastic}). Together these maps define our \JAAMM for numerical simulations below. 

In fact we can show that the probability of actually using the backstop map $\mathcal{B}$ can be made arbitrarily small by controlling the parameter $\rho$.
\begin{theorem}\label{thm:backstopProb}
Suppose that all the conditions of Theorem \ref{thm:result_jump} hold, and suppose that the time-stepping strategy satisfies \eqref{eq:defh_jump} in the statement of Lemma \ref{lem:stratPB_jump}, so that \eqref{eq:defYn_jump} holds for some $R<\infty$. For any fixed $\kappa\geq 1$ there exists a constant $C_{\text{prob}}=C_{\text{prob}}(T,R,h_{\max})$ such that for $h_{\max}\leq 1/\finalC{T}$, where $\finalC{T}$ is as in the statement of Theorem \ref{thm:result_jump},
\begin{equation}
\mathbb{P}[h_{n+1}\leq h_{\min}]\leq C_{\text{prob}}\rho^{1-2\kappa}+(1-e^{-\lambda h_{\max}/\rho}).    
\end{equation}
Further for arbitrarily small tolerance $\varepsilon\in(0,1)$, there exists $\rho>0$ such that
\begin{equation}
\mathbb{P}[h_{n+1}\leq h_{\min}]<\varepsilon,\quad n\in\mathbb{N}.    
\end{equation}
\end{theorem}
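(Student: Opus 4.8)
The plan is to split the backstop event $\{h_{n+1}\le\hmin\}$ according to the two mechanisms in the timestep rule \eqref{eq:defh_jump} that can force a short step, bound each contribution separately, and then drive both to zero as $\rho\to\infty$. Writing $a_n:=\hmin\vee\big(\frac{\hmax}{\|\Yjump{t_n}\|^{1/\kappa}}\wedge\hmax\big)$ and $b_n:=\tau_{\NumJumps{t_n}+1}-t_n$ for the distance to the next jump, \eqref{eq:defh_jump} reads $h_{n+1}=a_n\wedge b_n$ with $a_n\ge\hmin$ always. Hence $\{h_{n+1}\le\hmin\}=\{a_n=\hmin\}\cup\{b_n\le\hmin\}$, and exactly as in the proof of Lemma \ref{lem:stratPB_jump} the first event is $\{\|\Yjump{t_n}\|\ge\rho^\kappa\}=\{\|\Yjump{t_n}\|\ge R\}$ (since $\hmax>\hmin$ forces $\frac{\hmax}{\|\Yjump{t_n}\|^{1/\kappa}}\le\hmin$, i.e. $\|\Yjump{t_n}\|\ge\rho^\kappa$). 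A union bound then gives
\[
\prob{h_{n+1}\le\hmin}\le\prob{\|\Yjump{t_n}\|\ge\rho^\kappa}+\prob{b_n\le\hmin}.
\]

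For the first term I would apply Chebyshev together with a second-moment bound on the scheme. Since $\Yjump{t_n}=\Xjump{t_n}-\Ejump{t_n}$, the triangle inequality combined with finiteness of the second moments of the exact solution (a standard consequence of Assumptions \ref{ass:f+g}--\ref{ass:jump_gamma}, sharpened by Lemma \ref{lem:boundedMomentsSDE_jump}) and the global error estimate of Theorem \ref{thm:result_jump} (which gives $\expect{\|\Ejump{t_n}\|^2}\le\finalC{T}^2\hmax^{2\order}\le1$ once $\hmax\le1/\finalC{T}$ and $\order\ge1$) yields $\expect{\|\Yjump{t_n}\|^2}\le C'$ for a constant $C'$ independent of $n$. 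Therefore
\[
\prob{\|\Yjump{t_n}\|\ge\rho^\kappa}\le\frac{\expect{\|\Yjump{t_n}\|^2}}{\rho^{2\kappa}}\le C'\rho^{-2\kappa}\le C'\rho^{1-2\kappa},
\]
the last step using $\rho>1$; this identifies the constant $C_{\text{prob}}$.

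The second term is the main obstacle, and it is genuinely jump-specific. The key observation is that $t_n$ is a stopping time for the (non-anticipative) filtration carrying the Poisson process: from $t_n$ one proceeds toward the adaptive landing point $t_n+a_n$ unless a jump intervenes first, so $t_{n+1}=(t_n+a_n)\wedge\tau_{\NumJumps{t_n}+1}$ is a minimum of an $\mathcal{F}_{t_n}$-measurable time and a jump time, hence again a stopping time, with no future jump information needed to determine it. I would then invoke the strong Markov property of the homogeneous Poisson process at $t_n$: whether or not $t_n$ is itself a jump time (if it is, the residual is the fresh inter-jump waiting time $\pi_{\NumJumps{t_n}+1}\sim\exp(\lambda)$; if not, memorylessness applies to the current inter-jump interval), the residual $b_n$ is, conditionally on $\mathcal{F}_{t_n}$, exponential with parameter $\lambda$. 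Consequently
\[
\prob{b_n\le\hmin}=1-e^{-\lambda\hmin}=1-e^{-\lambda\hmax/\rho},
\]
using $\hmin=\hmax/\rho$ from Assumption \ref{ass:h}. The care required here lies in reconciling the anticipative filtration $\mathcal{F}_t=\sigma(\mathcal{G}_t\cup\mathcal{H}_T)$ used for the convergence proof with the non-anticipative viewpoint needed to apply the strong Markov property, and in handling cleanly the two cases according to whether $t_n$ coincides with a jump.

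Combining the two bounds yields the stated inequality. For the final claim I would let $\rho\to\infty$: since $\kappa\ge1$ we have $1-2\kappa<0$, so $C_{\text{prob}}\rho^{1-2\kappa}\to0$, while shrinking $\hmax$ to respect the admissibility constraint $\hmax\le1/\finalC{T}$ (note $\finalC{T}=O(\sqrt{\rho})$ from \eqref{eq:CJRrhot}) drives $\hmin=\hmax/\rho\to0$ and hence $1-e^{-\lambda\hmax/\rho}\to0$. Choosing $\rho$ large enough to push the first term below $\varepsilon/2$ and then $\hmax$ small enough to push the second below $\varepsilon/2$ gives $\prob{h_{n+1}\le\hmin}<\varepsilon$ uniformly in $n$.
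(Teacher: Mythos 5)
Your proposal is correct and follows essentially the same route as the paper: the same splitting of $\{h_{n+1}\leq \hmin\}$ into an adaptive trigger and a jump trigger, a bound $C_{\text{prob}}\,\rho^{1-2\kappa}$ for the former (your Chebyshev-plus-moment-bound argument is precisely the content of Theorem 4.2 of \cite{KLSBit}, which the paper invokes by citation rather than reproduces), the exponential distribution function $1-e^{-\lambda\hmax/\rho}$ for the latter, and $\rho\to\infty$ for the tolerance claim. The only differences are minor: the paper uses a disjoint decomposition where you use a union bound, and your explicit discussion of memorylessness of the residual jump time (reconciling the anticipative filtration $\mathcal{F}_t=\sigma(\mathcal{G}_t\cup\mathcal{H}_T)$ with the strong Markov viewpoint) spells out a point the paper passes over in one line.
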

\begin{proof}
Decompose the event
\begin{align*}
\left\{h_{n+1}\leq h_{\min}\right\}=\underbrace{\left[\left\{\frac{h_{\max}}{\|Y(t_n)\|}\leq h_{\min}\right\}\cap\left\{\tau_{\bar N_{t_n}+1}-t_n>h_{\min}\right\}\right]}_{=:E}
\cup\underbrace{\left\{\tau_{\bar N_{t_n}+1}-t_n<h_{\min}\right\}}_{=:F}
\end{align*}
Since events $E$ and $F$ are disjoint, we can write
\[
\mathbb{P}[h_{n+1}\leq h_{\min}]=\mathbb{P}[E]+\mathbb{P}[F].
\]
First, event $E$ corresponds to a set of trajectories where no jump has occurred over the step $h_{n+1}$, and therefore the same argument used in the proof of Theorem 4.2 in \cite{KLSBit} along with with the condition \eqref{eq:Case_I_twostep} gives
\[
\mathbb{P}\left[E\right]\leq C_{\text{prob}}\rho^{(1-2\kappa)}
\]
Second, event $F$ may be expressed in terms of the distribution function of an exponential random variable with rate $\lambda$, giving (since $\rho=h_{\max}/h_{\min}$)
\[
\mathbb{P}[F]=1-e^{-\lambda h_{\max}/\rho}.
\]
The second part of the statement of the Theorem follows again from Theorem 4.2 in \cite{KLSBit} and the fact that $\lim_{\rho\to\infty}\mathbb{P}[F]=0$.
\end{proof}

\section{Numerical experiments}\label{sec:numerics}
In this section we confirm by experiment the order-one strong convergence of the \JAAMM method and compare its computational efficiency against several jump adapted fixed-step methods in two ways. First, we use two scalar test examples with (respectively) additive and multiplicative diffusion terms to demonstrate the effect of increasing the jump rate parameter $\lambda$ on the error constant. Second, we use three 2-dimensional test systems with diagonal, commutative, and non-commutative diffusion coefficients respectively to observe the effects of these perturbation structures on computational efficiency in the presence of jumps. 

\subsection{The Monte Carlo estimator and reference solution}
Since our test systems do not have solutions that can be represented explicitly in terms of the diffusion and jump noise processes, we estimate the strong error in $L_2$ using as a reference solution the jump adapted fixed-step projected Milstein method \cite{beyn2017stochastic} over a fine mesh, which is known to be strongly convergent of order one uder the conditions of Assumptions \ref{ass:f+g} and \ref{ass:jump_gamma}.

We approximate the final-time error in mean-square for all methods tested  using a Monte Carlo approach with a reference solution computed over a fine mesh.  
%
To ensure a fair comparison between methods, we first run the adaptive method for all realizations and define 
$$h_{\text{mean}}:=\frac{1}{M}\sum_{m=1}^{M}\frac{T}{N_{m}-\bar{N}_m},
$$
with $N_{m}$ denoting the number of adaptive steps taken on the $m^{th}$ sample path to reach $T$, and $\bar{N}_m$ the total number of jumps on the same path. $h_{\text{mean}}$ is used as the fixed stepsize for all other fixed-step methods that we want to compare. We have subtracted $\bar{N}_m$ from $N_m$ when computing $h_{\text{mean}}$ because jump times will be superimposed on the mesh once again when running fixed-step methods. On our convergence plots we show a reference line of slope one.

To compare computational efficiency, we re-run the adaptive method with other fixed-step methods separately, with independent Wiener increments and without a reference solution. The CPU time consumed for each method on each realisation is recorded as the measurement of the computational time they require. Finally, we take the sample mean of all the CPU times to approximate the expected efficiency.

We compare \JAAMM to the following jump adapted methods : projected Milstein \cite{beyn2017stochastic} with scale parameter $0.25$, denoted \texttt{JA-PMil}, a jump adapted variant of the split-step backward Milstein (\texttt{JA-SSBM}) \cite{beyn2017stochastic} and the tamed Milstein method (\texttt{JA-TMil}) \cite{KumarSabanis2019}.

\subsection{One-dimensional test systems}
In order to demonstrate strong convergence of order one for a scalar test equation with non-globally Lipschitz drift, we use as our test equation
\begin{multline}\label{eq:1D_model2}
\Xjump{t}=\XjumpStart+\int_{0}^{t}\big[\Xjump{s}-3\Xjump{s}^3\big]ds+\int_{0}^{t}g(\Xjump{s})dW(s)\\
+\int_{0}^{t}\int_{Z}\gamma\big(z,\Xjump{r^-}\big)J_\nu(dz\,\times\,dr),\quad t\in[0,1],
\end{multline}
We consider both additive and multiplicative noise.
For the additive noise case we set $g(x)=\sigma$, and for the multiplicative noise case we set $g(x)=\sigma(1-x^2)$ with $\sigma=0.2$. In both cases $\XjumpStart=0.5$, $T=1$, and the jump coefficient is $\gamma(z,x)=zx$ where $z\sim\mathcal{N}(0,0.01)$.%

For the reference solution, we use \texttt{JA-PMil} on a jump adapted fixed-step mesh with reference stepsize $2^{-18}$. In the case where an approximation step lands between two reference steps, we use a Brownian bridge to interpolate the reference Brownian trajectory. Throughout, $\rho=2^7$ (see Assumption \ref{ass:h}).

For both additive (Figure \ref{fig:1DAddMult} (a)-(b)) and multiplicative (Figure \ref{fig:1DAddMult} (c)-(d)) noises, we compare mean-square error for $\hmax=[2^{-14},2^{-13},2^{-12},2^{-11},2^{-10}]$ using $M=500$ sample trajectories. We can see in Figure \ref{fig:1DAddMult} that in both cases, \JAAMM shows advantages in error and CPU time when jumps occur with intensity $\lambda=2$. 

\begin{figure}
    \centering
  (a) \hspace{0.48\textwidth} (b)\\
    \includegraphics[width=0.48\textwidth]{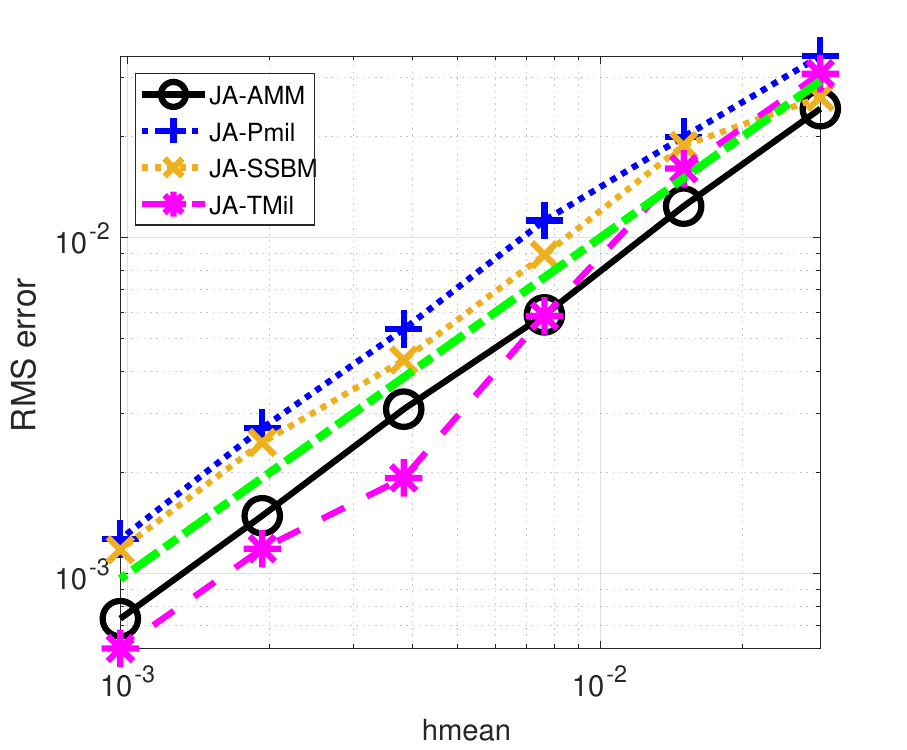}
    \includegraphics[width=0.48\textwidth]{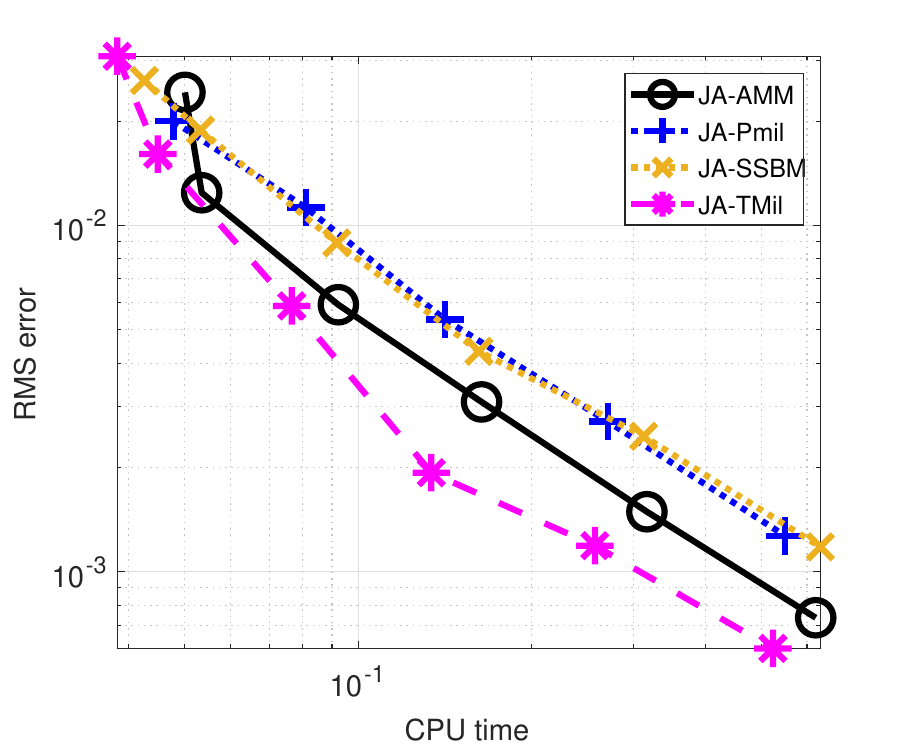}
    
  (c) \hspace{0.48\textwidth} (d)\\
\includegraphics[width=0.48\textwidth]{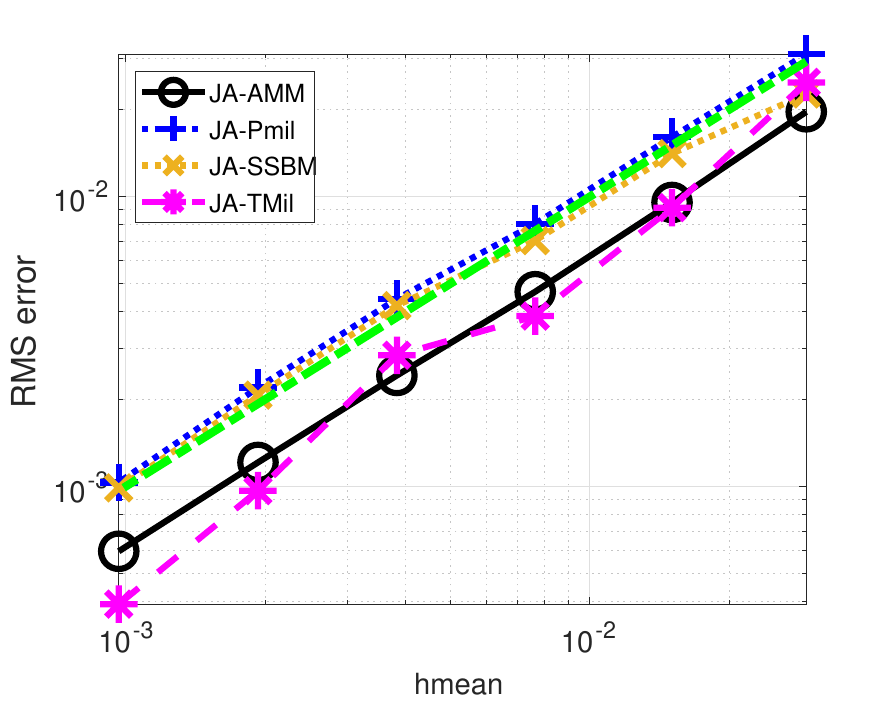}
    \includegraphics[width=0.48\textwidth]{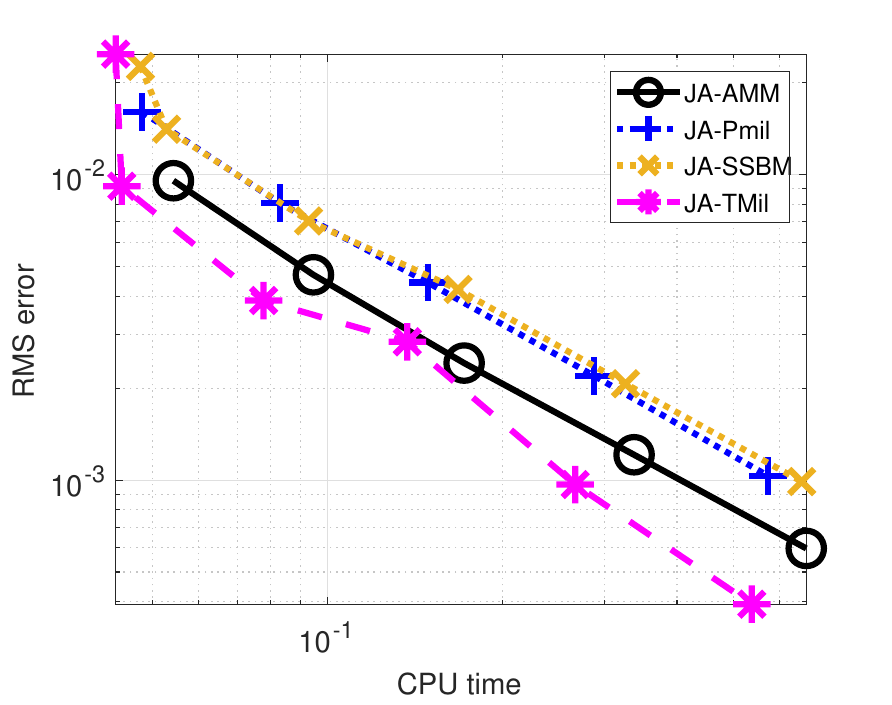}

    \caption{Strong convergence and efficiency of \JAAMM for approximating the one-dimensional system with (a) and (b) for additive noise; (c) and (d) for multiplicative noise. 
    }\label{fig:1DAddMult}
\end{figure}
Further, we demonstrate in Figure \ref{fig:1DLambda} the performance of \JAAMM when the jump intensity $\lambda$ increases. With the same settings for 1D multiplicative model (where $\lambda=2$), we see that \JAAMM improves its relative performance as the jump intensity increases from $25$ to $250$. 
Since \texttt{JA-TMil} displays significantly larger error constants in the case $\lambda=250$ (by at least two orders) we do not include the results in Figures \ref{fig:1DLambda} (c) and (d). For large $\lambda$, for the range of time steps considered, we do not observe convergence either for \texttt{JA-PMil}. \JAAMM shows both the best rate and error constant for $\lambda=250$.

\begin{figure}
    \centering
  (a) \hspace{0.48\textwidth} (b)\\
\includegraphics[width=0.49\textwidth]{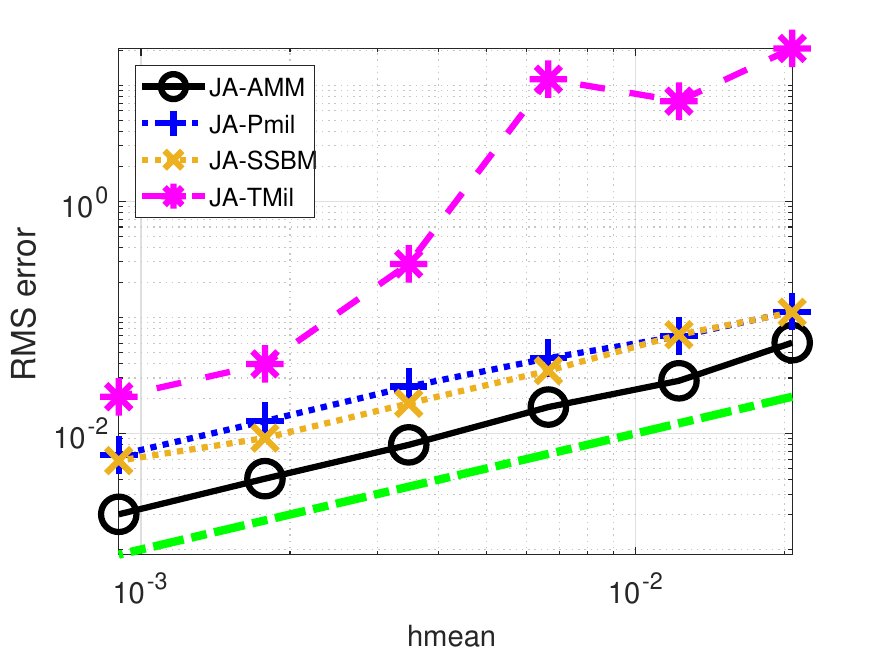}
    \includegraphics[width=0.47\textwidth]{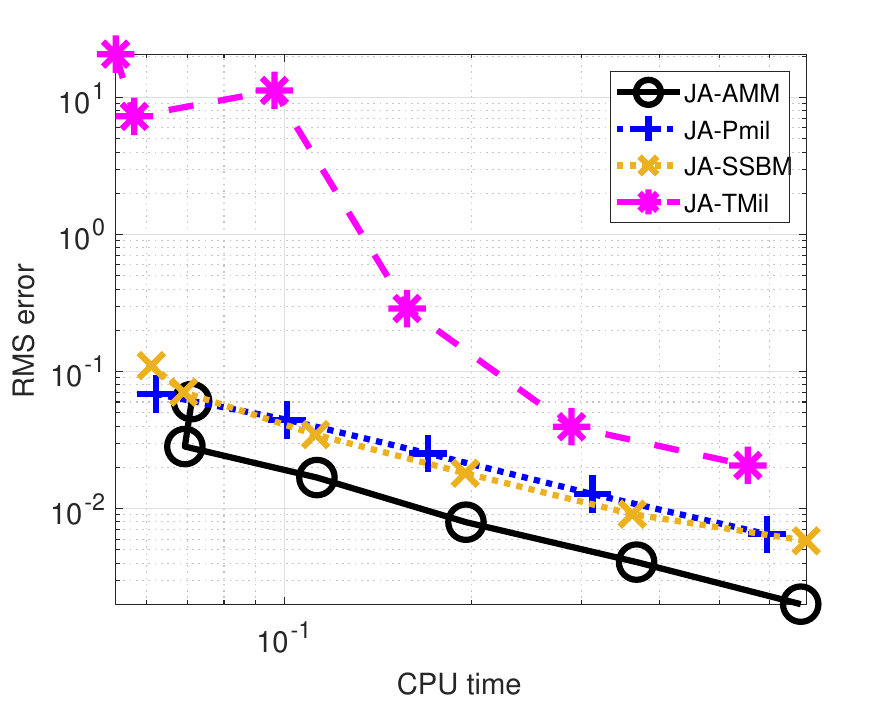}
 (c) \hspace{0.48\textwidth} (d)\\
 \includegraphics[width=0.49\textwidth]{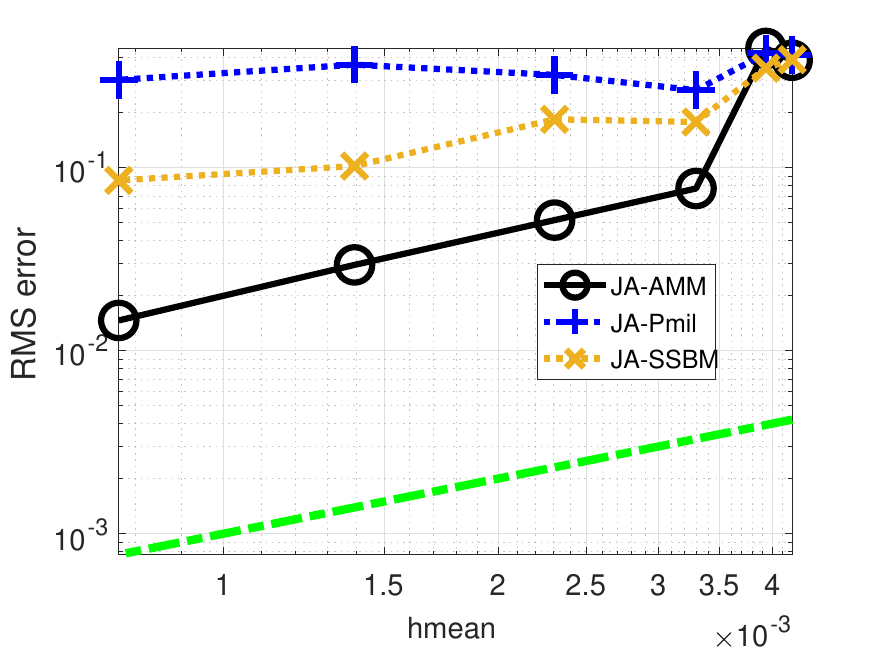}
    \includegraphics[width=0.47\textwidth]{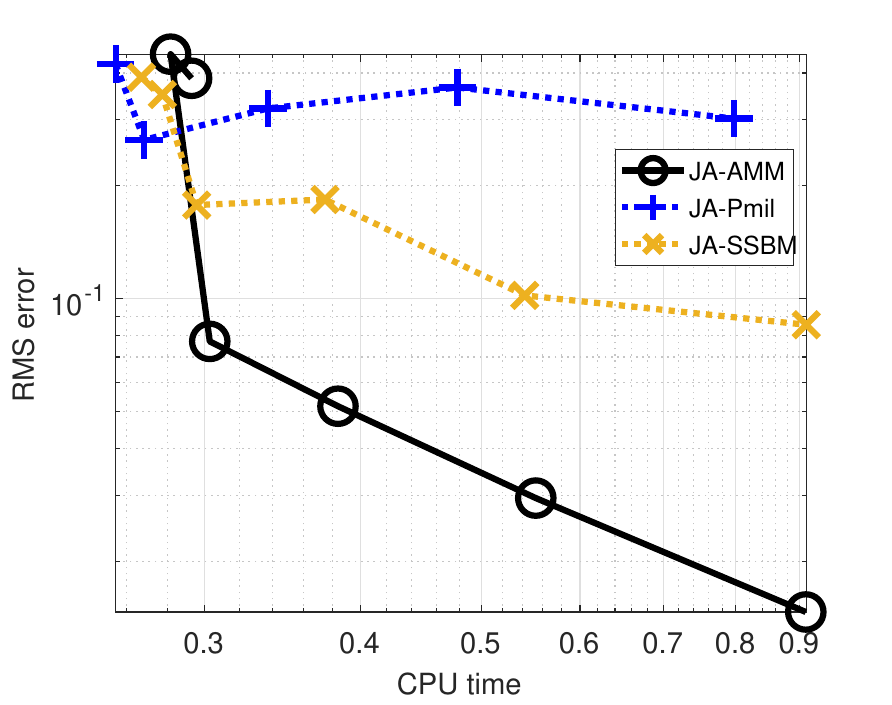}
    
    \caption{Strong convergence and efficiency of \JAAMM for approximating the one-dimensional system with multiplicative noise, when jump intensity increases. (a) and (b) with intensity 25; (c) and (d) with intensity 250.}
    \label{fig:1DLambda}
\end{figure}

\subsection{Two-dimensional test systems}
For 2D models, we consider the SDEs: 
\begin{multline}\label{eq:2D_model}
\Xjump{t}=\XjumpStart+\int_{0}^{t}F\big(\Xjump{s}\big)ds+\int_{0}^{t} G_i\big(\Xjump{s}\big)dW(s)\\
+\int_{0}^{t}\int_{Z}\gamma\big(z,\Xjump{r^-}\big)J_\nu(dz\,\times\,dr),\quad t\in[0,1],
\end{multline}
with $\XjumpStart=[7,9]^T$, $W(t)=[W_1(t),W_2(t)]^T$, where $W_1$ and $W_2$ are independent scalar Wiener processes, $\Xjump{t}=[X^{\texttt{J}}_1(t),X^{\texttt{J}}_2(t)]^T$,  $F(x)=[x_2-3x_1^3,x_1-3x_2^3]^T$, and three different diffusion terms are set as
\begin{align}
G_1(x)=\sigma\begin{pmatrix}x_1^2 & 0\\ 0 & x_2^2\end{pmatrix}, \, 
G_2(x)=\sigma\begin{pmatrix}x_2^2 & x_2^2\\ x_1^2 & x_1^2\end{pmatrix},\,
G_3(x)=\sigma\begin{pmatrix}1.5x_1^2 & x_2\\ x_2^2 & 1.5x_1\end{pmatrix}. \label{eq:num_Gs}
\end{align}
$G_1$ is an example of diagonal noise, $G_2$  commutative noise, and $G_3$ non-commutative noise. The jump coefficient is $\gamma=z\,x$ with $z\sim\mathcal{N}(0,0.01)$.

\begin{figure}
    \centering
 (a) \hspace{0.48\textwidth} (b)\\
\includegraphics[width=0.48\textwidth]{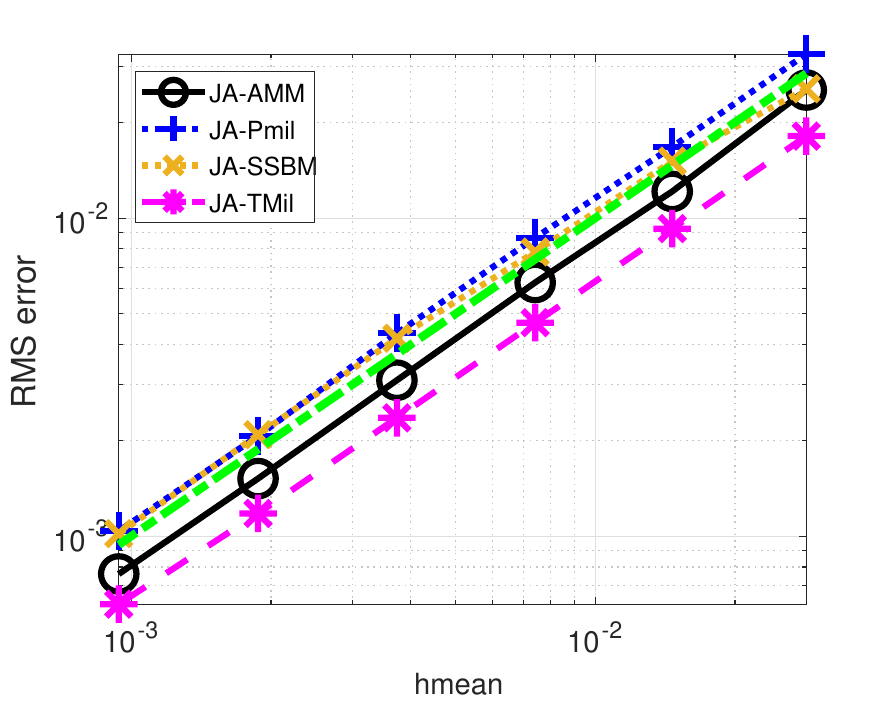}
    \includegraphics[width=0.48\textwidth]{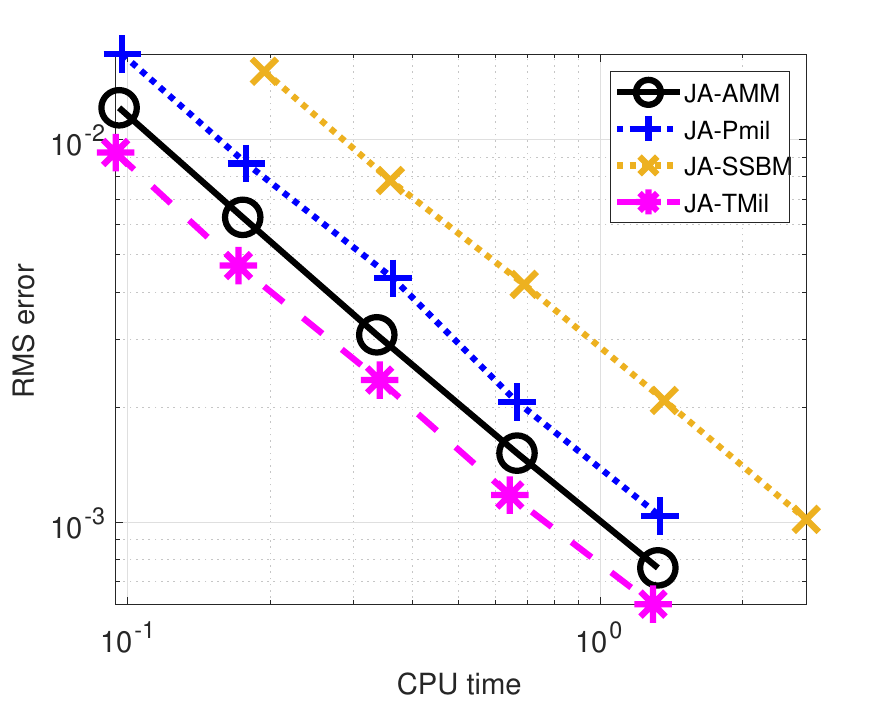}

(c) \hspace{0.48\textwidth} (d)\\
\includegraphics[width=0.48\textwidth]{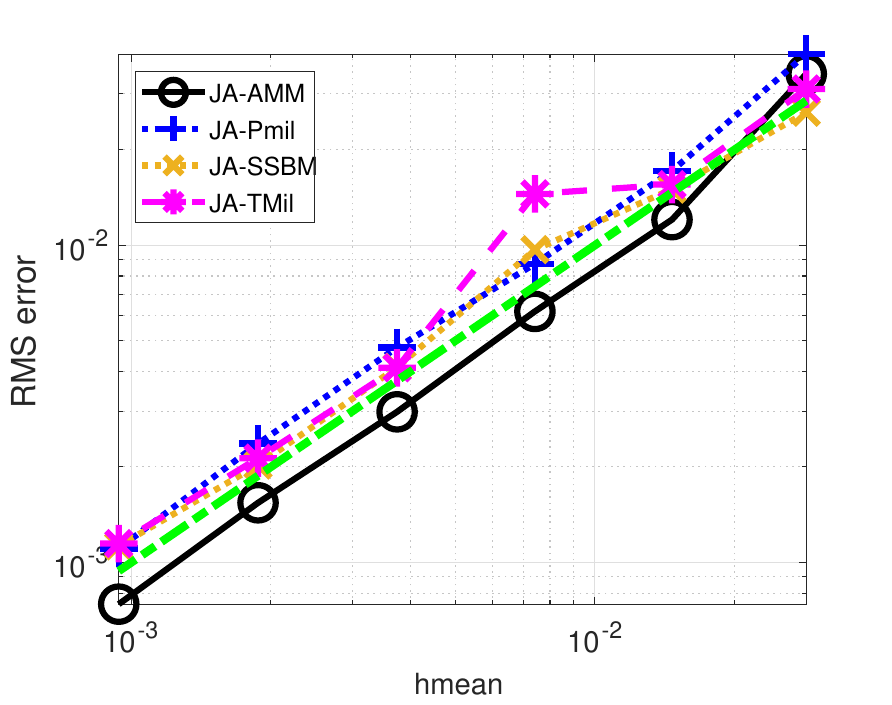}
    \includegraphics[width=0.48\textwidth]{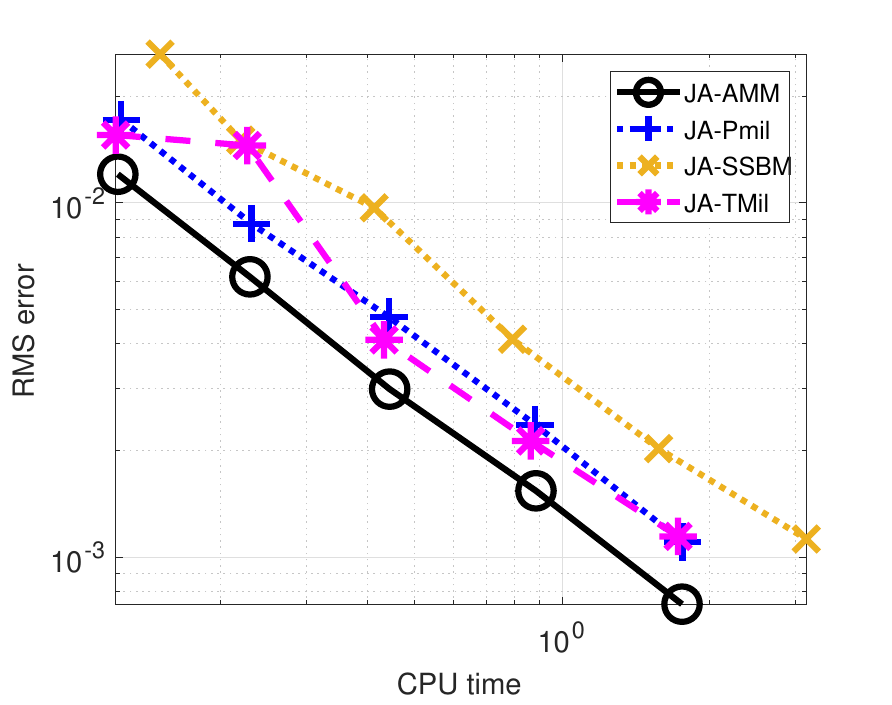}

    \caption{Strong convergence and efficiency of \JAAMM for approximating the two-dimensional system with (a) and (b) for diagonal noise; (c) and (d) for commutative noise.}\label{fig:2DLambda}
\end{figure}

For diagonal (Figure \ref{fig:2DLambda} (a)-(b)) and commutative (Figure \ref{fig:2DLambda} (c)-(d)) noises, we have initial value $[0.5,0.7]^T$, terminal time $T=1$, reference stepsize $2^{-18}$, $\hmax=[2^{-9},2^{-8},2^{-7},2^{-6},2^{-5}]$, jump intensity $\lambda=2.5$, jump sizes follow $\mathcal{N}(0,0.01)$, and $M=500$ sample trajectories. 

\begin{figure}
    \centering
    (a) \hspace{0.48\textwidth} (b)\\
    \includegraphics[width=0.49\textwidth]{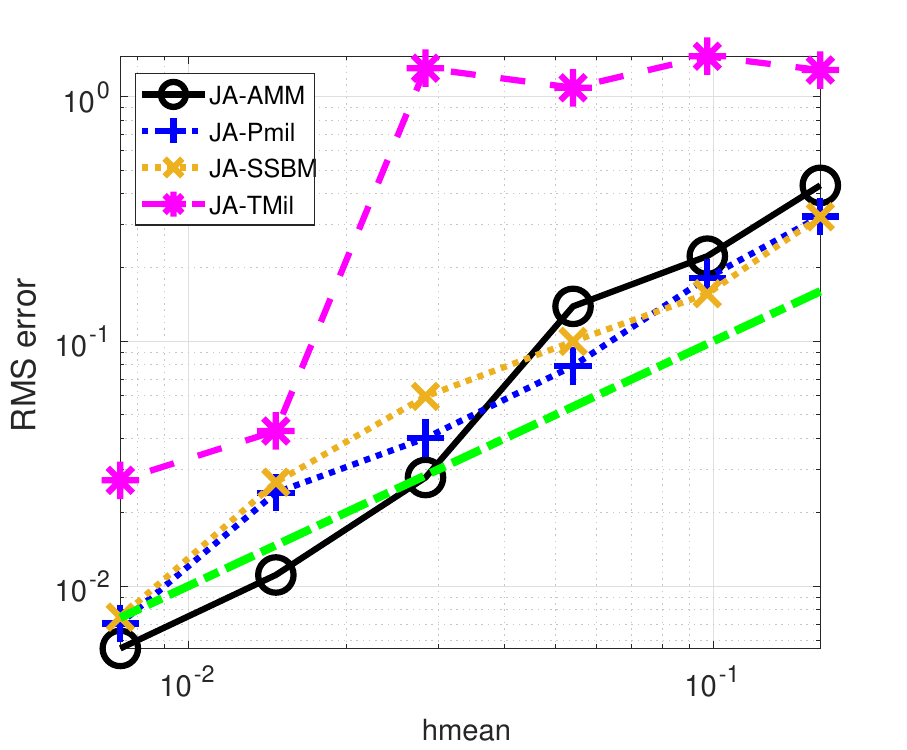}
    \includegraphics[width=0.47\textwidth]{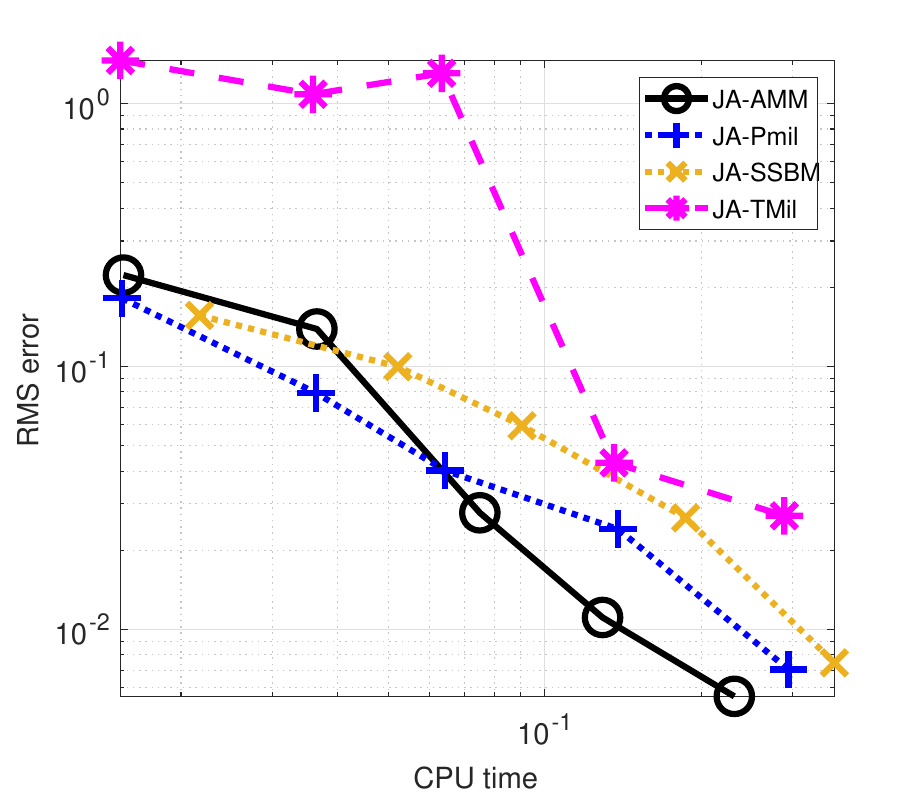}
    \caption{Strong convergence and efficiency of \JAAMM for approximating the two-dimensional system for non-commutative noise}\label{fig:2DLambdaNonCom}
\end{figure}

For non-commutative noise (Figure \ref{fig:2DLambdaNonCom}), we set $X_0=[0.5,0.7]^T$, reference stepsize $2^{-9}$, $\hmax=[2^{-6},2^{-5},2^{-4},2^{-3},2^{-2},2^{-1}]$, $M=100$, and keep other settings the same. We can see in Figure \ref{fig:2DLambdaNonCom} that \JAAMM again shows advantages in error and CPU time. 

\begin{remark}
Caution is advised when using as a backstop method a scheme such as projected or tamed Milstein, where large excursions in the solution are handled by a projection to a different part of the state space or a distortion of the SDE coefficients. These may introduce bias into the numerical solution. Nonetheless our numerical experiments illustrate the theoretical convergence rates predicted by Theorem \ref{thm:result_jump}.
\end{remark}

\section*{Acknowledgments}
The authors are grateful for the support of the RSE Saltire Research Facilitation Network Award 1832 ``Stochastic Differential Equations: Theory, Numerics, and Applications''.

\newpage
\bibliographystyle{abbrv}
\bibliography{Manuscript}

\end{document}